\documentclass[fontsize=11pt,toc=bib]{scrartcl}

\setkomafont{title}{\sffamily}
\setkomafont{pagehead}{\sffamily}
\setkomafont{pagenumber}{\sffamily}
\setkomafont{author}{\sffamily}
\addtokomafont{author}{\vspace*{2em}\large}

\setkomafont{dedication}{\footnotesize\raggedleft\itshape}
\usepackage[automark]{scrlayer-scrpage}
\ihead{\headmark}
\ohead{\pagemark}
\usepackage[
  paper=a4paper,
  left=80pt,
  right=80pt,
  top=86pt,
  bottom=56pt,
  bindingoffset=0mm,
  includefoot,
  includehead,
  foot=16.5pt,
  head=16.5pt
]{geometry}

\DeclareMathAlphabet{\pazocal}{OMS}{zplm}{m}{n}
\DeclareMathAlphabet{\mymathbb}{U}{bbold}{m}{n}
\DeclareMathAlphabet{\dutchcal}{U}{dutchcal}{b}{n}

\usepackage{bm}
\usepackage{floatrow}
\usepackage{longtable}
\usepackage{pdflscape}
\usepackage{tcolorbox}
\usepackage{comment}
\usepackage{enumitem}
\usepackage{mathtools}
\usepackage{fontawesome}
\usepackage{booktabs}
\usepackage{indentfirst}
\usepackage{lipsum}
\usepackage[english]{babel}
\usepackage{amsmath} 
\usepackage{amsthm}
\usepackage{amstext}
\usepackage[sfdefault]{FiraSans}
\usepackage[charter,expert]{mathdesign}
\usepackage[scaled=.96]{XCharter}
\usepackage{setspace}
\usepackage{nicefrac}
\usepackage[dvipsnames]{xcolor}
    \definecolor{linkcolor}{HTML}{0D6A9E}
    \definecolor{3blue}{HTML}{0072B2}
    \definecolor{3green}{HTML}{009E73}
    \definecolor{3ochre}{HTML}{E69F00}
    \definecolor{3yellow}{HTML}{F0E442}
    \definecolor{3cyan}{HTML}{56B4E9}
    \definecolor{3red}{HTML}{D55E00}
    \definecolor{3pink}{HTML}{CC79A7}
    \definecolor{2blue}{HTML}{1A85FF}
    \definecolor{2red}{HTML}{D41159}
\usepackage{graphicx}
    \graphicspath{ {./gfx/} }
\usepackage{caption}
\usepackage{tensor}
\usepackage{tikz-cd}
\tikzcdset{arrow style=tikz}
\tikzset{vertl/.style={anchor=south, rotate=90, inner sep=1mm}}
\tikzset{vertr/.style={anchor=south, rotate=-90, inner sep=1mm}}
\usepackage[hang,flushmargin]{footmisc} 
\usepackage{hyperref}
    \hypersetup{
    colorlinks=true,
    linktoc=all,
    linkcolor=linkcolor,
    citecolor=linkcolor,
    filecolor=magenta,      
    urlcolor=linkcolor,,
    pdfauthor=Davide Dal Martello,
    pdftitle=Cluster topograph
    }
\usepackage{cleveref}

\newtheoremstyle{komait}
      {\topsep}   
      {\topsep}   
      {\itshape}  
      {}       
      {\bfseries\sffamily} 
      {}         
      {5pt plus 1pt minus 1pt} 
      {\thmname{#1}\thmnumber{ #2}\thmnote{{\normalfont\,(#3)}}} 
    \newtheoremstyle{komanormal}
      {\topsep}   
      {\topsep}   
      {\rmfamily}  
      {0pt}       
      {\bfseries\sffamily} 
      {}         
      {5pt plus 1pt minus 1pt} 
      {\thmname{#1}\thmnumber{ #2}\thmnote{{\normalfont\,(#3)}}}          
\theoremstyle{komait}
    \newtheorem{theorem}{Theorem}[section]
    \newtheorem{corollary}{Corollary}[section]
    
    \newtheorem{definition}{Definition}[section]

\theoremstyle{komanormal}
    \NewCommandCopy{\proofqedsymbol}{\qedsymbol}
    \newcommand{\openboxthickness}{0.4pt} 
        \renewcommand{\openbox}{\leavevmode
          \hbox to.77778em{%
            \hfil\vrule width \openboxthickness
            \vbox to.675em{%
              \hrule width \dimexpr.675em-2\dimexpr\openboxthickness height \openboxthickness
              \vfil
              \hrule height\openboxthickness
            }%
            \vrule width \openboxthickness
            \hfil
          }%
        }
    \renewcommand{\openboxthickness}{.675pt}
        \AtBeginEnvironment{proof}{\renewcommand{\qedsymbol}{\proofqedsymbol}}
    \newtheorem{example}{Example}[section]
        \AtBeginEnvironment{example}{\pushQED{\qed}\renewcommand{\qedsymbol}{$\triangle$}}
        \AtEndEnvironment{example}{\popQED\endexample}
    
        \AtBeginEnvironment{exercise}{\pushQED{\qed}\renewcommand{\qedsymbol}{$\triangle$}}
        \AtEndEnvironment{exercise}{\popQED\endexample}
    \newtheorem{remark}{Remark}[section]
        \AtBeginEnvironment{remark}{\pushQED{\qed}\renewcommand{\qedsymbol}{$\triangle$}}
        \AtEndEnvironment{remark}{\popQED\endexample}

\newcommand{\G}[1]{\pazocal{G}(\!#1\!)}
\newcommand{\e}[1]{e(\!#1\!)}

\newcommand{\naturali}{\mathbb{N}}
\newcommand{\integer}{\mathbb{Z}}
\newcommand{\rational}{\mathbb{Q}}
\newcommand{\real}{\mathbb{R}}
\newcommand{\complex}{\mathbb{C}}

\begin{document}

\title{\usekomafont{subtitle}\LARGE\vspace{-5em}Cluster topography}
\author{\raisebox{-.5ex}{\href{https://orcid.org/0000-0002-4975-8774}{\includegraphics[height=15pt]{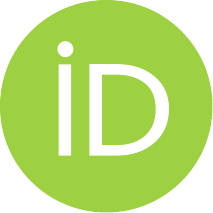}}}\hspace{.5em}Davide DAL MARTELLO\footnote{\hspace{.4em}Department of Mathematics, University of Padua, Via Trieste 63, 35121 Padova, IT\\\faEnvelopeO\hspace*{.5em}davide.dalmartello@unipd.it, \href{mailto:contact@davidedalmartello.com}{contact@davidedalmartello.com}}}
\date{\vspace{-1em}}

\maketitle

\begin{abstract}\noindent
Using the LP algebraic toolkit, Conway's original topograph is rethought of as a cluster construction, paving the way for a wider topography based on mutation-type local rules. As a remarkable application of such cluster-driven upgrade, both the process of analytic continuation for Painlevé VI and the reduction algorithm for quadratic forms are endowed with the Laurent phenomenon. En passant, the rattlesnake is defined so to complete the bijection between snake graphs and rationals to the whole of $\mathbb{Q}$.
\end{abstract}

\noindent
\begin{center}{\vspace{-.5em}\small{\textbf{\textsf{Keywords}}\hspace{.5em}topograph, cluster algebra, Painlevé VI, quadratic form.}}
\end{center}

\renewcommand{\contentsname}{\textcolor{linkcolor}{Contents}}

\tableofcontents

\section{Introduction}

The \textbf{Markov cluster algebra} is a foundational example in the ever more ubiquitous combinatorial framework of mutations. It is driven by the quiver of order 3 having singleton as mutation class (the \textbf{Markov quiver} in \Cref{fig:Markov}), namely the one quiver with three vertices that is invariant, up to isomorphism, under any mutation.
\begin{figure}[!ht]
    \centering
    \includegraphics[width=0.175\linewidth]{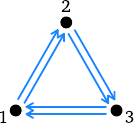}
    \caption{The archetype of mutation-invariant quivers: the Markov quiver. Any mutation behave as the identity up to orientation reversal, e.g. the quiver isomorphism $(1,3)\in\mathfrak{S}_3$.}
    \label{fig:Markov}
\end{figure}
Indeed, mutation is \emph{uniformly} defined at any cluster $(x_1,x_2,x_3)$ by
\begin{equation}\label{Markov-mut}
    \mu_i(x_i)=x_i'=\frac{x_j^2+x_k^2}{x_i}, \qquad i \neq j \neq k.
\end{equation}
Therefore, the algebra is manifestly finite-mutation type---but far from finite type, being not even finitely generated. In particular, its exchange graph is the three-regular tree $\mathbb{T}_3$.
In topological terms, this algebra is surface-type in that the Markov quiver corresponds to the once-punctured torus. Turning the hyperbolic structure on, lengths of closed geodesics give a geometric realization of Markov numbers at unital specialization of Teichm\"uller coordinates, mirroring the well-known algebraic realization given by selecting $(x^0_1,x^0_2,x^0_3)=(1,1,1)$ as initial cluster. Indeed, mutation \eqref{Markov-mut} preserves the \textbf{Markov equation}
\begin{equation}\label{Markov-eq}
    x_1^2+x_2^2+x_3^2=3x_1x_2x_3,
\end{equation}
a rare phenomenon we argue to be best framed in topographic terms.

Originally introduced by Conway \cite{Conway1997}, a \textbf{topograph} $\pazocal{T}$ is a planar embedding of $\mathbb{T}_3$ endowed with integral face labels in arithmetic progression:
displaying $\pazocal{T}$ locally as a compass (\Cref{fig:compass}),
\begin{equation}
    n-(w+e)=(w+e)-s
\end{equation}
must hold in $\integer$ anywhere.
\begin{figure}[!t]
    \centering
    \includegraphics[width=3.25em]{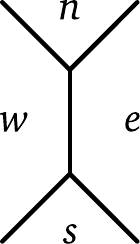}
    \caption{The local shape of a topograph, manifesting the full symmetry group $PGL_2(\integer)$: with respect to presentation \eqref{EUV}, $U$ acts as the vertical edge reflection while $E$ and $V$ as the rotation pivoted in, respectively, the edge center (order $2$) and a vertex (order $3$).}
    \label{fig:compass}
\end{figure}
This local rule can be rewritten as a recursive recipe that allows to construct the whole topograph starting from any vertex $\{n,e,w\}$:
\begin{equation}\label{Conway-rule}
    n+s=2(w+e) \ \iff \ s=2(w+e)-n.
\end{equation}
In particular, being preserved by the latter equality meant as the substitution $n\mapsto s$, the quantity
\begin{equation}
    I_{\mathrm{Con}}(w,n,e):=w^2+n^2+e^2-2(wn+ne+ew)
\end{equation}
is a topographic invariant: its integral value is the same at each and every vertex of $\pazocal{T}$.
As it happens, Markov numbers can be thought of as face labels of a topograph (\Cref{fig:Markovtop}) whose generating local rule is none other than Markov's original Vieta involution:
\begin{equation}\label{Markov-rule}
    n+s=3ew \ \iff \ s=3ew-n.
\end{equation}
In particular, the invariant reads now as
\begin{equation*}
    I_{\mathrm{Mar}}(w,n,e):=w^2+n^2+e^2-3wne
\end{equation*}
which, at zero value, recovers the Markov equation.
As anticipated, the cluster genesis of Markov numbers is, in fact, one of topography: for $x_2$ and $s$ acting as north label and mutated variable respectively without loss of generality,    
\begin{equation}\label{Vieta-to-cluster}
    s=3ew-n\overset{\eqref{Markov-eq}}{=}\frac{w^2+n^2+e^2}{n}-n=\frac{w^2+e^2}{n}.
\end{equation}
In other words, mutation and Vieta involution are equivalent once restricted to the Markov affine cubic \eqref{Markov-eq}. 
In this paper, we show that the converse is true: provided a combinatorial upgrade, topography is the stuff of mutation. In particular, Conway's original construction rooted in the theory of binary quadratic forms gets rebooted in cluster terms.   

More generally understood as a visualization of $PGL_2(\integer)$-dynamics, a topograph can be defined for any affine cubic surface playing the role of its vertex-invariant, provided a suitable constructing local rule is defined.
\begin{figure}[!t]
    \centering
    \includegraphics[width=0.525\linewidth]{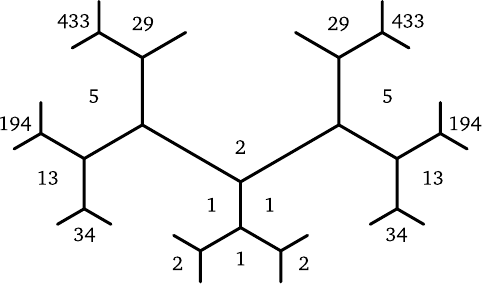}
    \caption{Beginning of the \textbf{Markov topograph}, whose vertices correspond to (positive) solutions of \eqref{Markov-eq}. Being the initial vertex $\{1,1,1\}$ invariant for the whole of $\mathfrak{S}_3$, the labeling is highly symmetric.}
    \label{fig:Markovtop}
\end{figure}
\begin{figure}[!t]
    \centering
    \includegraphics[width=0.875\linewidth]{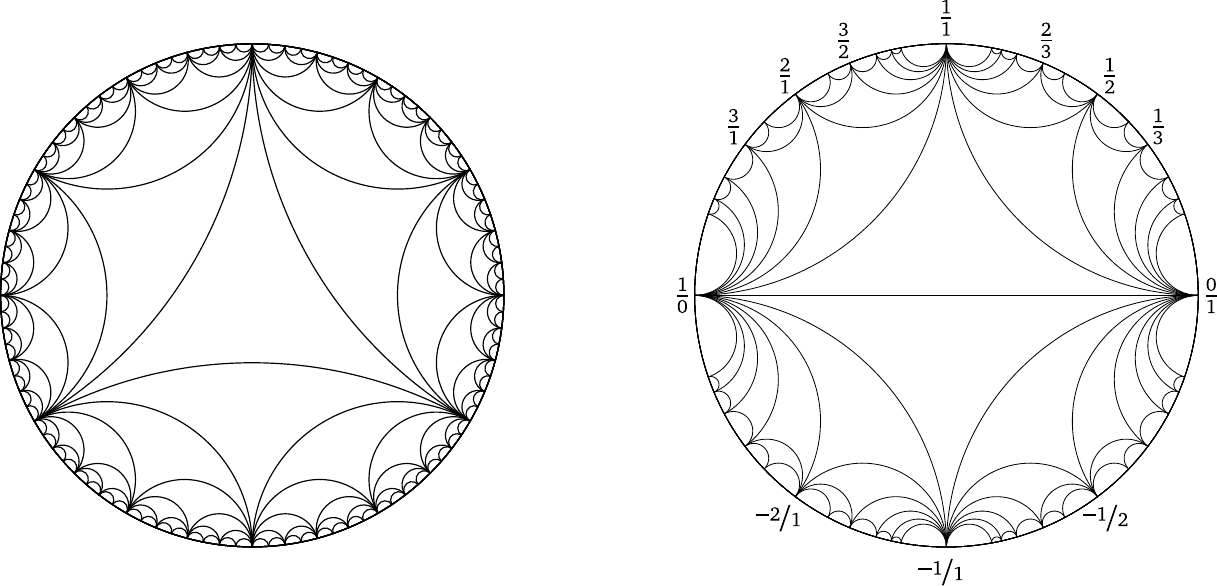}
    \caption{The circular Farey tessellation---topological (triangular) on the left, hyperbolic (quadrangular) on the right with Farey fractions on the boundary $\real$---gives a standard embedding of $\mathbb{T}_3^*$, the dual graph of the $3$-regular tree, on the disk.}
    \label{fig:Farey}
\end{figure}
We push such generalization to the limit by (parametric) saturation of the lower degree terms in the Markov equation:
\begin{equation}\label{master-eq}
    x_1^2+x_2^2+x_3^2+\delta_1x_2x_3+\delta_2x_3x_1+\delta_3x_1x_2+\sigma_1x_1+\sigma_2x_2+\sigma_3x_3+\zeta=\tau x_1x_2x_3.
\end{equation}
This cubic, referred to as the \textbf{master equation}, induces the invariant
\begin{equation}\label{master-inv}
    I_{\mathrm{Mas}}(w,n,e):=w^2+n^2+e^2+\delta_1ne+\delta_2ew+\delta_3wn+\sigma_1w+\sigma_2n+\sigma_3e-\tau wne
\end{equation}
which, in turn, is readily checked to be preserved by the local (parametric) rule
\begin{equation}\label{master-rule}
    n+s=\tau we-\delta_1e-\delta_3w-\sigma_2 \ \iff \ s=\tau ew-\delta_1e-\delta_3w-\sigma_2-n
\end{equation}
that admits a cluster genesis in the wider sense of \textbf{LP algebra}.
In a nutshell, this generalization of classical cluster algebra shifts the focus from exchange matrices to exchange polynomials, and quivers cease to control the LP mutation. Nevertheless, exchange polynomials are a signature conserved datum of our cluster dynamics (§\ref{sec:master}), which can thus be thought of as the ``maximally Laurent'' extension of the classical dynamics \eqref{Markov-mut} controlled by the Markov quiver.
As a crucial consequence, the topograph's recursive construction is endowed with the characteristic \textbf{Laurent phenomenon}: face labels are expressible as Laurent polynomials with respect to the three ones $\{n,e,w\}$ at any vertex.

Two remarkable applications showcase the impact of this new \textbf{cluster topography}: the nonlinear monodromy of the sixth Painlevé equation as an example of a complex topograph, the reduction of quadratic forms as a Diophantine example.
In the former, clusters $(w,n,e)\in\complex^3$ correspond to branches of a given solution and suitable pairs of mutations realize the structure of analytic continuation.
In the latter, clusters $(w,n,e)\in\integer^3$ correspond to binary quadratic forms of a given equivalence class and suitable sequences of mutations realize the reduction algorithm.
\begin{theorem}\label{thm:intro}
    The following processes exhibit the Laurent phenomenon:
    \begin{itemize}
        \item[$\mathrm{(A)}$] the analytic continuation of solutions to Painlevé VI,
        \item[$\mathrm{(B)}$] the reduction of binary integral quadratic forms.
    \end{itemize}
\end{theorem}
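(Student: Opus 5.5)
The plan is to reduce both (A) and (B) to one statement: the Laurent phenomenon for the master topograph. Concretely, one proves that the local rule \eqref{master-rule}, restricted to the level set $I_{\mathrm{Mas}}=\kappa$, is an LP mutation. This is the same manipulation as \eqref{Vieta-to-cluster}. Since $I_{\mathrm{Mas}}(w,n,e)$ is quadratic and monic in $n$, Vieta's formulas give
\[
ns = w^2+e^2+\delta_2ew+\sigma_1w+\sigma_3e+\zeta-\kappa .
\]
Hence on the invariant surface
\[
s=\frac{P_2(w,e)}{n}, \qquad P_2(w,e):=w^2+e^2+\delta_2ew+\sigma_1w+\sigma_3e+(\zeta-\kappa).
\]
The analogous polynomials $P_1,P_3$ are obtained by permuting indices.

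The first step is to check that the triple $(P_1,P_2,P_3)$ forms an LP seed in the sense of Lam--Pylyavskyy. Each $P_i$ must be irreducible in the other two variables and not divisible by them; this holds for generic parameters, and the constant-term condition can be arranged. The seed must also be invariant under mutation up to the normalization prescribed by LP exchange. The key observation here is that $P_j$ does not depend on $x_j$, and the LP rule replaces $P_j$ by $P_j$ with $x_i \mapsto P_i/x_i'$, followed by removal of monomial factors and normalization. The natural way to run this is to show that $P_j$ evaluated at $x_i=P_i/x_i'$, after clearing denominators, gives back a multiple of $P_j(x_i')$. This is the claimed ``signature conservation'' of the exchange polynomials. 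It reduces to the fact that the level surface is preserved, because $x_i$ and $x_i'$ are the two roots of the same quadratic. Once the seed is shown to be stable, the Lam--Pylyavskyy Laurent theorem gives that every face label is a Laurent polynomial in the labels $\{w,n,e\}$ of any fixed vertex. The coefficients lie in $\integer[\delta,\sigma,\zeta,\kappa,\tau]$ and are evaluated on the surface.

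Then specialize. For (A), take the Jimbo--Fricke cubic of Painlevé VI monodromy, which has the form \eqref{master-eq} with $\tau=1$ (up to sign conventions), $\delta_i=0$ and $\sigma_i,\zeta$ given by the monodromy exponents. Braid-group generators act on traces $(x_1,x_2,x_3)$ as Vieta involutions composed with permutations. Analytic continuation along loops is therefore the pair of mutations realizing $\sigma_i=\mu\circ$ (transposition), so every branch's coordinates are Laurent in the initial branch. For (B), take $\tau=0$, $\delta_i=-2$, $\sigma_i=\zeta=0$, which is Conway's rule \eqref{Conway-rule} with $I_{\mathrm{Con}}$ equal to the discriminant. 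The face labels $(w,n,e)$ around a vertex are values $f(v_1),f(v_2),f(v_1+v_2)$ of the form, so each reduction step (a $PGL_2(\integer)$ move) is a mutation. The reduced form's values are then Laurent in those of the starting form.

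The main obstacle is the degenerate specializations. In (B) we have $\tau=0$ and $P_2=(w-e)^2-\kappa$, which is reducible when the discriminant is a square; in (A) special exponents can make $P_i$ reducible. For these cases I would first prove the Laurent property over the generic parameter ring, where the LP axioms hold. Laurentness of the generic identities then survives specialization, because specialization is a ring homomorphism on $\integer[\text{params}][w^{\pm1},n^{\pm1},e^{\pm1}]$. The one delicate point is that the generic argument may require the constant $\zeta-\kappa$ to be treated as an independent parameter.
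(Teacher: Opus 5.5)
Your architecture matches the paper's. You read off the LP seed $F_i=x_j^2+x_k^2+\delta_ix_jx_k+\sigma_jx_j+\sigma_kx_k+\zeta$ from the master cubic (your constant $\zeta-\kappa$ is the paper's $\zeta$). You show these exchange polynomials are reproduced under mutation and invoke the Lam--Pylyavskyy Laurent theorem. Then you realize pure braids as $\beta_k^2=\mu_i\mu_j$ for (A) and $\mathrm{PGL}_2(\integer)$-moves by cluster transformations for (B).

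\textbf{The gap.} The one step that carries the weight, seed stability, is described in a form that fails. The LP rule does not substitute $x_i\mapsto P_i/x_i'$. It substitutes $x_i\leftarrow \hat F_i|_{x_j\leftarrow 0}/x_i'$, and then removes all common factors with $\hat F_i|_{x_j\leftarrow 0}$, not only monomials. Write $A:=x_k^2+\sigma_kx_k+\zeta$ and $c:=\delta_jx_k+\sigma_i$, so that $P_j=x_i^2+cx_i+A$. Your substitution only rewrites the old $P_j$ in the new cluster:
\[
x_i'^2\,P_j\big|_{x_i=P_i/x_i'}=A\,P_j(\mathbf{x}')+(P_i-A)\,(P_i+A+c\,x_i').
\]
Its remainder modulo the monic $P_j(\mathbf{x}')$ is nonzero, since $P_i-A=x_j^2+\delta_ix_jx_k+\sigma_jx_j$. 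It even involves $x_j$, which an exchange polynomial at $j$ may not.

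\textbf{The repair.} Setting $x_j=0$ first turns $P_i$ into $A$, so $G_j=A\,P_j(\mathbf{x}')/x_i'^2$. Stripping the non-monomial factor $A$ and multiplying by $M=x_i'^2$ then gives $F_j'=F_j(\mathbf{x}')$. Your stated reason is also off. This is an identity among algebraically independent variables, valid for all parameter values. It neither reduces to nor follows from preservation of the level surface. That is the separate fact \eqref{sol-preserved}, used only to identify the LP mutation with the Vieta rule on the cubic.

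\textbf{Where your proposal improves on the paper.} Once the step is fixed, your generic-then-specialize device is stronger than the paper's treatment. The paper applies LP directly to the specialized polynomials and says the computation ``works for any specialization''. But LP seeds require irreducible $F_i$, and this fails in two places:
\begin{itemize}
\item for $(w-e)^2-\Delta$ when $\Delta=k^2$ (the paper's Corollary for (B) does exclude squares);
\item for the PVI conics when $4\theta_4=\theta_j^2+\theta_k^2$.
\end{itemize}
Over $\integer[\delta,\sigma,\zeta]$ the $F_i$ are irreducible, being linear in $\zeta$. Specialized mutations are still birational involutions, so the specialized clusters stay algebraically independent. They are therefore the images of the generic Laurent polynomials. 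To evaluate, you still need nonzero base coordinates, as in the paper's $(\complex^*)^3$ and $(\integer^*)^3$.

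\textbf{Two smaller points.} In (A), state explicitly that the transposition in $\beta_k=(ij)\mu_j$ also swaps $a_i\leftrightarrow a_j$. That is why only the squares act on a fixed slice of the monodromy manifold. In (B), $T$ and $U$ act by permutations rather than mutations. This is harmless because the Conway $F_i$ are $\mathfrak{S}_3$-symmetric. Your argument for (B) uses only connectivity of the exchange graph, whereas the paper also extracts an explicit reduction word from the rattlesnake.
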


In particular, part B requires an extension of the standard connection between so-called snake graphs and continued fraction expansions that is of independent interest. More precisely \cite{CS2018}, a planar graph $\pazocal{G}[a_1,a_2,\ldots,a_n]$ can be attached to any \emph{positive} continued fraction ($a_i\geq1$) in such a way that the latter results as the quotient of the number of perfect matchings of $\pazocal{G}[a_1,a_2,\ldots,a_n]$ and $\pazocal{G}[a_2,\ldots,a_n]$. For the cluster reduction algorithm to succeed in all possible cases of a fraction $\pm\frac{r}{s}=[a_1,a_2,\ldots,a_n]\in\rational$, positiveness must be lifted by adopting the \textbf{rattlesnake} $(\pazocal{G},e)$ instead.
Crucially, this marked one-tile extension of $\pazocal{G}[a_1,\ldots,a_n]$ can tell the fraction's sign and magnitude all the while counting its perfect matchings recovers $r+s$, offering a generalization of the original bijection \cite[\S4]{CS2018} to the \emph{whole} of $\rational$.

The present paper is organized as follows:

\S\ref{sec:master} develops the general theory of cluster topographs underpinned by the master equation and the associated constructive LP local rule;

\S\ref{sec:specializations} delves into the pair of applications, starting with Painlevé VI (\S\ref{sec:PVI}) and later turning Conway's original topography cluster (\S\ref{sec:quadratic}), culminating at last in a mutation-type reduction algorithm based on the rattlesnake (\S\ref{sec:clustred}). 

\paragraph{Acknowledgments}
\!\!\!The author is truly grateful to Daniel Labardini Fragoso for both suggesting this line of research and the many insightful discussions, which were enriched by the contributions of Javier De Loera Chávez, Azzurra Ciliberti, and Viola Conte.
In particular, specialization \eqref{q-mutation} of the ``master setting'' answers in the positive a question raised by Labardini Fragoso.
This project was funded by Fondazione Cariparo [C93C22008360007] via University of Padua.

\section{Master is cluster}\label{sec:master}

This section lays the foundation of a cluster topography driven by the master equation \eqref{master-eq}, starting with the definition of a rational ``master'' local rule.
Mimicking computations \eqref{Vieta-to-cluster} from our inspiring example, at once we merge and generalize Conway and Markov rules with a striking cluster flavor: on the master cubic with $x_2$ and $s$ acting again as north label and mutated variable respectively,
\begin{equation}\label{master-rule2}
\begin{aligned}
    n=\tau we-\delta_1e-\delta_3w-\sigma_2-s&\overset{\eqref{master-eq}}{=}\frac{w^2+n^2+s^2+\delta_2ew+\sigma_1w+\sigma_3e+\zeta}{n}-n=\\&\,=\frac{w^2+s^2+\delta_2ew+\sigma_1w+\sigma_3e+\zeta}{n}.
\end{aligned}
\end{equation}
Indeed, specialization
\begin{equation}\label{Mar-spec}
    \sigma_i=\delta_i=0,\ \tau=3
\end{equation}
recovers the Vieta involution while specialization
\begin{equation}\label{Con-spec}
    \sigma_i=0,\ \delta_i=-2,\ \tau=0
\end{equation}
recovers the arithmetic progression.
Notice that rule \eqref{master-rule}, which can be thought of as Vieta-type for a cubic polynomial \emph{at most quadratic} in every variable, is obtained by collecting a (negative) factor of $n$ in the master invariant:
\begin{equation}
    w^2+n^2+\delta_2ew+\sigma_1w+\sigma_3e-n\underbrace{(\tau ew-\delta_1e-\delta_3w-\sigma_2-n)}_s.
\end{equation}

Here goes the core question: 
\begin{quote}
Is the last rational expression in \eqref{master-rule2} a mutation formula?
\end{quote}
Validation comes from it both being involutive, as the archetype of an elementary combinatorial operation, and exhibiting the signature Laurent phenomenon.

\paragraph{Involutiveness} Checking the formula undoes itself is a straightforward computation: applying the rule twice,
\begin{equation}
    n \ \mapsto \ \frac{w^2+s^2+\delta_2ew+\sigma_1w+\sigma_3e+\zeta}{n} \ \mapsto \ \frac{w^2+s^2+\delta_2ew+\sigma_1w+\sigma_3e+\zeta}{\frac{w^2+s^2+\delta_2ew+\sigma_1w+\sigma_3e+\zeta}{n}}=n.
\end{equation}

\paragraph{Laurentness} As the would-be exchange polynomial $w^2+s^2+\delta_2ew+\sigma_1w+\sigma_3e+\zeta$ is far from the binomial fitting classical cluster algebra, we must rely on the theory of Laurent Phenomenon (LP) algebras, whose namesake property is stated in \cite[Theorem 5.1]{LP2016}. In particular, proving our candidate exchange polynomial is really an invariant of the LP mutation suffices to establish the Laurent phenomenon.

In LP jargon, our seed reads from the master equation as
\begin{equation}
    t=(\mathbf{x},\mathbf{F})
\end{equation}
for $\mathbf{x}=\{x_1,x_2,x_3\}$ and $\mathbf{F}=\{F_1,F_2,F_3\}$ the triple of irreducible exchange polynomials
\begin{equation}
    F_i=x_j^2+x_k^2+\delta_ix_jx_k+\sigma_jx_j+\sigma_kx_k+\zeta, \qquad i\neq j \neq k \in \integer_3.
\end{equation}
Given the cyclic symmetry, the triplet of mutations can be still described uniformly on $t$ by $\mu_i$, $i=1,2,3$. Since both $F_j$ and $F_k$ depend on $x_i$, $\hat{F}_i=F_i$ for all $i=1,2,3$ and
\begin{equation}
    \mu_i(t)=t'=(\mathbf{x}',\mathbf{F}') 
\end{equation}
with
\begin{equation}
    \begin{cases}
        x_i'=\dfrac{x_j^2+x_k^2+\delta_ix_jx_k+\sigma_jx_j+\sigma_kx_k+\zeta}{x_i},\\
        x_j'=x_j,\\[.4em]
        x_k'=x_k,\\[.7em]
    \end{cases}
\end{equation}
and, a priori, only $F_i'=F_i$ preserved.
For the master equation to be preserved by any mutation, we must further have $F_y'=F_y$ and $F_z'=F_z$.
Namely, mutation must be uniformly defined across \emph{all} clusters much as it happened with the Markov cluster algebra.
Following LP rules, first define
\begin{align*}
    G_j=&\ F_j\raisebox{-.275em}{\big|}_{x_i\leftarrow\frac{F_i|_{x_j\leftarrow0}}{x_i'}}=\\
    =&x_k^2+\left(\frac{x_k^2+\sigma_kx_k+\zeta}{x_i'}\right)^2+\delta_jx_k\left(\frac{x_k^2+\sigma_kx_k+\zeta}{x_i'}\right)+\sigma_kx_k+\sigma_i\left(\frac{x_k^2+\sigma_kx_k+\zeta}{x_i'}\right)+\zeta=\\=&(x_k^2+\sigma_kx_k+\zeta)\underbrace{\left(1+\frac{\sigma_i}{x_i'}+\frac{\delta_jx_k}{x_i'}+\frac{x_k^2+\sigma_kx_k+\zeta}{x_i'^2}\right)}_{H_j}
\end{align*}
and
\begin{align*}
    G_k=&\ F_k\raisebox{-.275em}{\big|}_{x_i\leftarrow\frac{F_i|_{x_k\leftarrow0}}{x_i'}}=\\\allowdisplaybreaks
    =&\left(\frac{x_j^2+\sigma_jx_j+\zeta}{x'_i}\right)^2+x_j^2+\delta_k\left(\frac{x_j^2+\sigma_jx_j+\zeta}{x_i'}\right)x_j+\sigma_i\left(\frac{x_j^2+\sigma_jx_j+\zeta}{x_i'}\right)+\sigma_jx_j+\zeta=\\\allowdisplaybreaks=&(x_j^2+\sigma_jx_j+\zeta)\underbrace{\left(1+\frac{\sigma_i}{x_i'}+\frac{\delta_kx_j}{x_i'}+\frac{x_j^2+\sigma_jx_j+\zeta}{x_i'^2}\right)}_{H_k}.
\end{align*}
Given that
\begin{equation}
\begin{aligned}
    H_j=\frac{x_k^2+x_i'^2+\delta_jx_kx_i'+\sigma_kx_k+\sigma_ix_i'+\zeta}{x_i'^2}=\frac{F_j|_{x_i\leftarrow x_i'}}{x_i'^2},\\
    H_k=\frac{x_i'^2+x_j^2+\delta_kx_i'x_j+\sigma_ix_i'+\sigma_jx_j+\zeta}{x_i'^2}=\frac{F_k|_{x_i\leftarrow x_i'}}{x_i'^2},
\end{aligned}    
\end{equation}
one naively chooses $M=x_i'^2$ as absorbing monomial to confirm that both $F'_j=M\cdot H_j=F_j(\mathbf{x}')$ and $F'_k=M\cdot H_k=F_k(\mathbf{x}')$. In particular, notice this constructive proof works for any specialization of the parameters $\zeta,\sigma_i,\delta_j,\tau$.

\begin{figure}[!t]
    \centering
    \includegraphics[width=0.5\linewidth]{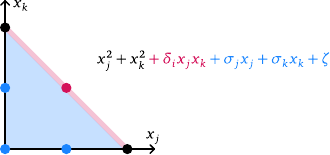}
    \caption{Interpretation à la Newton for the relative generalization of the Markov exchange polynomials: Checkov-Shapiro in {\color{2red}red}, Laurent Phenomenon in {\color{2blue}blue}. CS generalization saturates the Newton polygon of the standard binomial $x_j^2+x_k^2$ with the remaining ($1$-parametric) quadratic monomial {\color{2red}$\delta_ix_jx_k$}, while the LP framework further allows all ($1$-parametric) monomials of lower degree. The resulting monomials exhibit a flag-like containment: LP monomials span the whole face of the triangle, and contain CS monomials on the hypotenuse line segment---itself containing the classical cluster binomial as endpoints.}
    \label{fig:Newton}
\end{figure}

\begin{remark}
In surface cluster algebraic terms, the topology can only control the squared monomials of the exchange polynomial. Generalizing classical cluster algebra, Chekhov and Shapiro can give the mixed quadratic term a geometric encoding via $\lambda$-lengths by allowing orbifold points on the once-punctured torus \cite{CS2013}. We postpone to future work the investigation of a finer geometric setting able to further control the lower degree terms handled by the LP machinery. As observed in \cite[§7.3]{LP2016}, any
exchange polynomial of a cluster algebra (being just a binomial) has a line segment as Newton polytope. Essentially, Chekhov-Shapiro generalized cluster algebras $\pazocal{A}_{CS}$ are those Laurent Phenomenon algebras $\pazocal{A}_{LP}$ for which
there exists a cluster algebra $\pazocal{A}$ and a bijection between the seeds of $\pazocal{A}$ and $\pazocal{A}_{LP}$ that preserves the Newton polytope. In light of this interpretation, \Cref{fig:Newton} visualizes the containment relation that arises for the three types of algebras when the graph-theoretic information is given by very same Markov quiver.
\end{remark}

\paragraph{Topography}
Conway's beautiful concept of a topograph for quadratic forms, whose anticipated cluster nature is explicitly formalized in §\ref{sec:quadratic}, can be given a ``master'' manifestation. Each cluster $(x_1,x_2,x_3)$ solving \eqref{master-eq} is visualized by a trivalent vertex of the $3$-regular tree $\mathbb{T}_3$, with the three connected solutions being precisely the clusters obtained via mutations $\mu_1$, $\mu_2$, and $\mu_3$. Indeed, assuming $\mathbf{x}$ solves the master equation, $\mathbf{x}'=\mu_i(\mathbf{x})$ remains a solution: for $i\neq j \neq k \in \integer_3$, 
\begin{multline}\label{sol-preserved}
    x_i^{\prime2}+x_j^2+x_k^2+\delta_ix_jx_k+\delta_jx_kx'_i+\delta_kx'_ix_j+\sigma_ix'_i+\sigma_jx_j+\sigma_kx_k+\zeta-\tau x'_ix_jx_k=\\=\dfrac{F_i}{x_i^2}(x_i^2+x_j^2+x_k^2+\delta_ix_jx_k+\delta_jx_kx_i+\delta_kx_ix_j+\sigma_ix_i+\sigma_jx_j+\sigma_kx_k+\zeta-\tau x_ix_jx_k)
\end{multline}
and last factor vanishes.
In other words, the topograph's skeleton is none other than the \textbf{exchange graph}, whose dual object is the \textbf{cluster complex} \cite[\S3.6]{LP2016}. In the latter, clusters are visualized by maximal simplices that are adjacent whenever a mutation maps between the corresponding solutions. In our cubic setting, a maximal simplex is a triangle and the cluster complex can be compactly visualized by the \emph{topological} \textbf{Farey tessellation} of the disk (\Cref{fig:Farey}) whose central reference triangle plays the role of initial cluster (\Cref{fig:clustop}).
\begin{figure}[!t]
    \centering
    \includegraphics[width=0.3\linewidth]{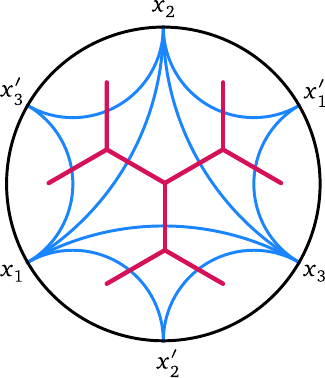}
    \caption{{\color{2red}Topograph} as exchange graph, (topological) {\color{2blue}Farey tessellation} as cluster complex. Labels of the $\infty$-gons in the former are vertices in the latter 
    and belong to the same algebraic structure the cubic is defined within, e.g. the field $\complex$ or the ring $\integer$.}
    \label{fig:clustop}
\end{figure}
In particular, the triangle's full symmetry group is the dihedral $D_3\simeq\mathfrak{S}_3$. Whether any of these symmetries lifts to the solving triplet---and is thus compatible with Laurentness---depends on the genericity of the cubic's parameters as showcased in \Cref{ex:genericity}.
Nevertheless, mutations are always allowed in the form of motion to adjacent triangles by edge-reflection, giving a copy of $\integer_2*\integer_2*\integer_2$ meant as a subgroup of the topograph's full symmetry group $PGL_2(\integer)$. 

Assuming the master equation is integer-valued, namely has $\integer^8$ as parameter space and integral solutions, the analogy with Conway's original topography is even stronger.
Indeed, the LP mutation preserves integrality when restricted to solutions, being equivalent to rule \eqref{master-rule} on the master equation: assuming $\mathbf{x}\in\integer^3$ solves \eqref{master-eq} and $i\neq j \neq k \in \integer_3$, 
\begin{equation*}
\begin{aligned}
    x_i'=\mu_i(x_i)&=\dfrac{x_j^2+x_k^2+\delta_ix_jx_k+\sigma_jx_j+\sigma_kx_k+\zeta}{x_i}\overset{\eqref{master-eq}}{=}\frac{x_i(\tau x_jx_k-x_i-\delta_jx_k-\delta_kx_j-\sigma_i)}{x_i}=\\&=\tau x_jx_k-x_i-\delta_jx_k-\delta_kx_j-\sigma_i\in\integer.
\end{aligned}
\end{equation*}
Such integral-compatibility gives an entry point for a Diophantine take on cluster topography: find the subset of the parameter space $\integer^8$ for which the master equation admits integral solutions, verify the LP mutation produces them all, study if the resulting integral topography is ``hydrographic'' as found by Conway, and so on. We plan to explore this specialization in future work.

\begin{example}\label{ex:genericity}
    Inspecting the respective invariants, both Markov and Conway specializations admit a lift of the full $\mathfrak{S}_3$. Indeed, triplet of Markov numbers are customarily considered as unordered while $\mathfrak{S}_3<PGL_2(\integer)$, the latter being the natural equivalence group for binary integral quadratic forms (cf. \S\ref{sec:equiv}). Consider now the following specialization of the master equation:
    \begin{equation*}
        x_1^2+x_2^2+x_3^2+\sigma(x_1+x_2)+x_3=0.
    \end{equation*}
    By visual inspection, one expects only $(12)\in\mathfrak{S}_3$ to fit the resulting cluster topography.
    Indeed, $\mu_1(12)=(12)\mu_2$ while $\mu_3$ and $(12)$ simply commute. E.g.,
    \begin{align*}
        (x_1,x_2,x_3) \ &\overset{\mu_2}{\longmapsto} \ \left(x_1,\frac{\sigma x_1+x_1^2+x_3^2+x_3}{x_2},x_3\right) \ \overset{(12)}{\longmapsto} \ \left(\frac{\sigma x_1+x_1^2+x_3^2+x_3}{x_2},x_1,x_3\right) \ \longmapsto \\ &\overset{\mu_3}{\longmapsto} \ \left(\frac{\sigma x_1+x_1^2+x_3^2+x_3}{x_2},x_1,\frac{x_3^4+\ldots}{x_2^2x_3}\right)=(12)\mu_3\mu_2(x_1,x_2,x_3)=\mu_3\mu_1(x_2,x_1,x_3).
    \end{align*}
    Moreover,
    \begin{equation*}
        \mu_3(23)\mu_2(x_1,x_2,x_3)=\left(x_1,x_3,x_2\frac{ x_1^2+x_3^2+\sigma x_1+\sigma x_3}{x_1^2+x_3^2+\sigma x_1+x_3}\right)
    \end{equation*}
    generates a truly rational expression, confirming our expectations in full. 
\end{example}

\begin{remark}
    Local rules, being just algebraic, directly preserve the vertex-invariant so that its value does not appear explicitly in their polynomial expression. On the contrary, were $\zeta$ missing from our LP mutation rule, only
    \begin{equation*}
        I_{\mathrm{Mas}}(\mu_i(\mathbf{x}))=\frac{F_i(\mathbf{x})}{x_i^2}I_{\mathrm{Mas}}(\mathbf{x})
    \end{equation*}
    would hold, failing to map solutions to solutions. Cf. \eqref{sol-preserved}, where the extra factor $F_i/x_i^2$ does not prevent the mutated triple from solving the equation. This phenomenon didn't manifest in the Markov special case \eqref{Vieta-to-cluster}, whose invariant's value is needed to be \emph{precisely} zero. On the flip side, the cubic term (parametric in $\tau$) only contributes to the local rule and disappears entirely from the cluster language. Like a wave-particle duality, the polynomial local rule and the rational mutation formula, respectively first and last in \eqref{master-rule2}, coexist to yield complementary understandings of their common discrete topographic dynamics.    
\end{remark}

\begin{remark}
When $x_i\rightarrow1$, the Markov equation reduces to the ``Kronecker equation'': 
\begin{equation}\label{Kronecker-eq}
    x_j^2+x_k^2+1=3x_jx_k.
\end{equation}
Indeed, the Vieta involution degenerates to
\begin{equation*}
    x_j \mapsto 3x_k-x_j=\frac{x_k^2-1}{x_j},
\end{equation*}
matching the exchange binomial driven by the \emph{mutation-invariant} Kronecker quiver $K_2$. As expected, positive integral solutions are those neighboring any of the three $\infty$-gons with label 1 in the Markov topograph.
More generally, the limit $x_i\rightarrow1$ of the master equation reads, after incorporating parameters, as 
\begin{equation}
    x_j^2+x_k^2+\sigma_jx_j+\sigma_kx_k+\zeta=\delta x_jx_k,
\end{equation}
namely a master (Kronecker) equation with exchange polynomials
\begin{align*}
    F_j&=x_j^2+\sigma_jx_j+\zeta,\\
    F_k&=x_k^2+\sigma_kx_k+\zeta,
\end{align*}
for which the Chekhov-Shapiro toolkit suffices.
This phenomenon hints at a more general theory for mutation-invariant quivers, but other attempts (e.g. $A_2$ or $K_3$) seem to suggest otherwise. 
\end{remark}

\section{Two specializations}\label{sec:specializations}

By specialization of the master parameters, this section tailors our emergent cluster topography to the highly influential settings of the Painlevé equations and binary quadratic forms, endowing a core process in each theory with the Laurent phenomenon.

\subsection{$\mathrm{PVI}$ monodromy manifold and analytic continuation}\label{sec:PVI}

 Throughout this subsection, the ordered triple of indices $(i,j,k)$ denotes any cyclic permutation of the $3$-tuple $(1,2,3)$.
 We focus first on the \textbf{Painlevé VI equation}, denoted $\mathrm{PVI}=\mathrm{PVI}(\kappa)$ as being dependent on the quadruple of parameters $(\kappa_1,\kappa_2,\kappa_3,\kappa_4)\in\complex^4$:
\begin{multline}\label{PVI}
y_{tt}=\frac{1}{2}\left({1\over y}+{1\over y-1}+{1\over y-t}\right) y_t^2 -
\left({1\over t}+{1\over t-1}+{1\over y-t}\right)y_t+\\+{y(y-1)(y-t)\over 2t^2(t-1)^2}\left[\kappa_{\!4}^2-\kappa_{\!1}^2 {t\over y^2}+
\kappa_{\!2}^2{t-1\over (y-1)^2}+(1-\kappa_{\!3}^2){t(t-1)\over(y-t)^2}\right].
\end{multline}
By the defining \textbf{Painlevé property}, movable singularities of this second order ODE---which acts as the nonlinear analogue of the Gauss hypergeometric equation---can only be simple poles. In particular, any solution $y(t)$ is precisely branched just at the equation's own critical points $0,1,\infty$, and can be thus analytically continued to the whole universal cover of $\overline{\complex}\setminus\{0,1,\infty\}$.

As anticipated, we tailor the theory established in \Cref{sec:master} so to rethink the process of analytic continuation for PVI as one of cluster combinatorics, beautifully bridging complex analysis with combinatorial algebra.
The key viewpoint is provided by the Riemann-Hilbert correspondence, which turns the (nonlinear) monodromy of PVI into a $\Gamma(2)$-action by polynomial automorphisms on the so-called \textbf{monodromy manifold} 
\begin{equation}\label{mon-mfd}
    x_1x_2x_3+x_1^2+x_2^2+x_3^2-\theta_1x_1-\theta_2x_2-\theta_3x_3+\theta_4=0,
\end{equation}
where $\theta=\theta(a_1,a_2,a_3,a_\infty)$ as
\begin{equation}\label{theta-as-a}
\begin{aligned}
    \theta_i=&a_ia_\infty+a_ja_k,\\
    \theta_4=&a_1a_2a_3a_\infty+a_1^2+a_2^2+a_3^2+a_\infty^2-4,
\end{aligned}
\end{equation}
under correspondence
\begin{equation}
    a_i=2\mathrm{cos}(\pi\kappa_i), \quad a_\infty=-2\mathrm{cos}(\pi\kappa_4).
\end{equation}

The first instance of this ``algebrification'' was given in \cite{DM2000} for the 1-parametric specialization $\mathrm{PVI}(\mu)$ selected by $\kappa_1=\kappa_2=\kappa_3=0$ and $\kappa_4=2\mu-1$. Taking advantage of the equation's isomonodromic genesis, branches of solutions are parametrized by triples $(M_0,M_1,M_t)$ of monodromy matrices---each one measuring the failure of single-valuedness as its corresponding pole is encircled. In turn, this representation space admits a reflection-type coordinatization singled out as the zero set of a special cubic polynomial, further simplifying the parametrization:
\begin{theorem}[{\cite[Theorem 1.4]{DM2000}}]\label{thm:branches}
    The branches of solutions to PVI$(\mu)$ near $p_0\in\overline{\complex}\setminus\{0,1,\infty\}$ are in one-to-one correspondence with equivalence classes of the admissible triples $(x_1,x_2,x_3)$ satisfying
 \begin{equation*}
     x_1^2+x_2^2+x_3^2-x_1x_2x_3-4\mathrm{sin}^2(\pi\mu)=0.
 \end{equation*}
\end{theorem}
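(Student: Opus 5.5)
The proof would go through the isomonodromic origin of $\mathrm{PVI}(\mu)$ and the Riemann--Hilbert correspondence, and then coordinatize the monodromy data by traces. The first step is to recall that $\mathrm{PVI}(\kappa)$ describes the isomonodromic deformations, in the pole $t$, of a $2\times2$ Fuchsian system
\begin{equation*}
    \frac{dY}{dz}=\left(\frac{A_0}{z}+\frac{A_1}{z-1}+\frac{A_t}{z-t}\right)Y,\qquad A_\infty=-(A_0+A_1+A_t),
\end{equation*}
where the eigenvalues of $A_0,A_1,A_t,A_\infty$ are $\pm\kappa_1/2,\pm\kappa_2/2,\pm\kappa_3/2,\pm\kappa_4/2$. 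The solution $y(t)$ is read off as the root of the $(1,2)$ entry of $A(z)$ that is not $z=\infty$. For the specialization $\kappa_1=\kappa_2=\kappa_3=0$, $\kappa_4=2\mu-1$, the matrices $A_0,A_1,A_t$ are nilpotent, and I would normalize $A_\infty$ to be diagonal.

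I would then argue as follows.
\begin{itemize}
    \item A branch of a solution near $p_0$ fixes a germ of the Fuchsian system up to diagonal gauge. By isomonodromy, this germ has a well-defined monodromy representation of $\pi_1(\overline{\complex}\setminus\{0,1,t,\infty\},\ast)$, that is, a triple $(M_0,M_1,M_t)$ up to simultaneous conjugation.
    \item Each of $M_0,M_1,M_t$ is unipotent, so it has trace $2$, and $M_\infty^{-1}=M_tM_1M_0$ has trace $-2\cos(2\pi\mu)$.
    \item Conversely, Riemann--Hilbert solvability (Plemelj--Bolibruch) reconstructs the system from the monodromy data. This needs genericity, namely $2\mu\notin\integer$ together with irreducibility of the triple. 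The reconstructed system is analytic in $t$ away from a divisor, and that divisor accounts for the poles of $y(t)$ allowed by the Painlevé property.
\end{itemize}

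The second step is the reflection-type coordinatization. I would set $x_1=\mathrm{tr}(M_0M_1)$, $x_2=\mathrm{tr}(M_1M_t)$, $x_3=\mathrm{tr}(M_0M_t)$, possibly after a sign or shift such as $x_i\mapsto 2-x_i$ or $-x_i$, to be tuned. The Fricke--Vogt trace identity for three $SL_2$ matrices and their product gives
\begin{equation*}
    x_1x_2x_3+x_1^2+x_2^2+x_3^2-\theta_1x_1-\theta_2x_2-\theta_3x_3+\theta_4=0
\end{equation*}
in the shape of \eqref{mon-mfd}, with $\theta$ given by \eqref{theta-as-a}. Substituting $a_1=a_2=a_3=2$ and $a_\infty=-2\cos(\pi\kappa_4)=2\cos(2\pi\mu)$ gives $\theta_i=2a_\infty+4$. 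The linear terms can then be absorbed by an affine change of the $x_i$, combined with a sign flip that turns $+x_1x_2x_3$ into $-x_1x_2x_3$. A routine computation should turn the constant into $-(2-2a_\infty+\ldots)=-4\sin^2(\pi\mu)$, which is the cubic in the statement.

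The third step, which I expect to be the main obstacle, is bijectivity between conjugacy classes of triples and points of the cubic.
\begin{itemize}
    \item \emph{Injectivity.} For an irreducible triple with prescribed traces, the seven traces $\mathrm{tr}M_\bullet$, $\mathrm{tr}M_\bullet M_\bullet$, $\mathrm{tr}M_0M_1M_t$ determine the conjugacy class, and here all but the $x_i$ are fixed. So the difficulty lies in the non-generic locus. I would define \emph{admissible} triples to exclude the reducible representations and the resonant points where some $x_i=\pm2$ (and possibly $x_i=0$). On that locus, distinct classes can share traces or the Riemann--Hilbert problem fails to be solvable.
    \item \emph{Surjectivity.} For admissible points on the cubic I would construct unipotent $M_0,M_1,M_t$ explicitly, for instance in the triangular normal form $M_0=\begin{pmatrix}1&\ast\\0&1\end{pmatrix}$, $M_1=\begin{pmatrix}1&0\\\ast&1\end{pmatrix}$, and solve for $M_t$.
\end{itemize}
Finally, branches near $p_0$ correspond to the choice of base point and basis of loops. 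Changing the branch acts on the triple by braid moves, so the quotient described as "equivalence classes" in the statement must be exactly the conjugation action. I would check that nothing else is identified, which closes the one-to-one correspondence.
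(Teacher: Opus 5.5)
The paper gives no proof of this statement; it quotes it from Dubrovin--Mazzocco. Your first step (isomonodromy plus Riemann--Hilbert) is the right frame. The genuine gap is in your second step: the linear terms of the Fricke cubic cannot be absorbed by \emph{any} affine change. With $a_1=a_2=a_3=2$ and $a_\infty=2\cos(2\pi\mu)$ you get $\theta_i=2a_\infty+4=8\cos^2(\pi\mu)$.

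Here is why no affine change works. A substitution $x\mapsto Lx+b$ carrying one cubic polynomial to a multiple of the other must send $x_1x_2x_3$ to a multiple of itself, so $L$ is monomial. A nonzero translation $b$ would then create mixed terms $x_jx_k$, which are absent on both sides, so $b=0$. A monomial $L$ leaves the terms $-\theta_ix_i$ alive unless $\cos(\pi\mu)=0$, and $2\mu\notin\integer$ forbids that. So no tuning of signs and shifts of $\mathrm{tr}(M_jM_k)$ produces the stated cubic.

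The Dubrovin--Mazzocco coordinates are instead \emph{square roots} of shifted traces, $x^2=2-\mathrm{tr}(M_jM_k)$. Substitute $X_i=2-x_i^2$ into the Fricke polynomial with local traces $2$ and $\mathrm{tr}(M_0M_1M_t)=d$. The result is exactly
\begin{equation*}
(x_1^2+x_2^2+x_3^2+d-2)^2-(x_1x_2x_3)^2 ,
\end{equation*}
and for $d=2\cos(2\pi\mu)$ one factor is the stated cubic, since $2-d=4\sin^2(\pi\mu)$. A canonical way to fix the signs: write $M_i-I=u_iu_i^{T}J$ with $J=\left(\begin{smallmatrix}0&1\\-1&0\end{smallmatrix}\right)$, and set $x_1=u_0^{T}Ju_1$, $x_2=u_1^{T}Ju_t$, $x_3=u_t^{T}Ju_0$. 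These are invariant under $\mathrm{SL}_2$-conjugation, and expanding $\mathrm{tr}(M_0M_1M_t)$ gives the cubic directly.

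This misreading carries over into your third step.
\begin{itemize}
\item \emph{The equivalence relation.} Each $u_i$ is fixed only up to sign, and $u_i\mapsto-u_i$ flips exactly the two coordinates involving $i$. This $(\integer_2)^2$ of pair sign-changes is the equivalence in the statement. It is not conjugation, which the invariants have already quotiented out. It is certainly not braid moves, which relate \emph{distinct} branches that the theorem counts separately.
\item \emph{Admissibility.} It is not about $x_i=\pm2$ or reducibility. A reducible triple of unipotents has $\mathrm{tr}(M_0M_1M_t)=2\neq2\cos(2\pi\mu)$, so irreducibility is automatic. Two vanishing coordinates, by contrast, force some $u_i=0$, i.e.\ a trivial local monodromy. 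For instance, if $x_1=x_3=0$ with $u_0\neq0$, then $u_1$ and $u_t$ are parallel to $u_0$, so $x_2=0$ and the cubic gives $0=4\sin^2(\pi\mu)$.
\item \emph{The bijection.} ``At most one zero'' therefore means exactly that all $M_i\neq I$, equivalently that the sign action is free. That is what makes admissible classes correspond bijectively to conjugacy classes of triples. Injectivity and surjectivity then follow by normalizing a pair $u_i,u_j$ with nonzero pairing and solving two linear equations for the third vector.
\end{itemize}

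There is also a slip in your first step. In the paper's form of PVI, the exponent difference of $A_\infty$ is $\kappa_4+1=2\mu$, not $\kappa_4$. So $A_\infty$ has eigenvalues $\pm\mu$ and $\mathrm{tr}(M_0M_1M_t)=2\cos(2\pi\mu)$. Your value $-2\cos(2\pi\mu)$ would give the constant $4\cos^2(\pi\mu)$ instead.
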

In particular, the notion of equivalence is driven by a manifest symmetry of the affine cubic surface (sign-change on pairs of coordinates) while admissibility boils down to having at most one zero coordinate $x_i$. The theorem provides an algebraic sandbox allowing to encapsulate the process of analytic continuation into the following action of $B_3=\langle\beta_1,\beta_2 \ | \ \beta_1\beta_2\beta_1=\beta_2\beta_1\beta_2 \rangle$, the braid group in three strands \cite[Lemma 1.10]{DM2000}:
\begin{equation}\label{braid}
\begin{aligned}
    \beta_i \ : \ (x_i,x_j,x_k) \ &\mapsto \ (-x_i,x_k-x_ix_j,x_j),    
\end{aligned}
\end{equation}
where we defined the additional generator $\beta_3:=\beta_1\beta_2\beta_1^{-1}$ in a cyclic fashion taking advantage of equivalence:
\begin{equation*}
\beta_1\beta_2\beta_1^{-1}(x_1,x_2,x_3)=(-x_2+x_3x_1,-x_1,-x_3)\sim(x_2-x_3x_1,x_1,-x_3).   
\end{equation*}
In fact, $(\beta_1\beta_2)^3=1$ reveals this as an action of the modular group $\Gamma:=\mathrm{PSL}_2(\integer)=B_3/Z(B_3)$ in that $Z(B_3)=\langle(\beta_1\beta_2)^3\rangle$. Moreover, $\beta_i$ actually expresses the \emph{superposition} of analytic continuation and the permutation of finite poles
\begin{equation}
    (u_i,u_j,u_k) \ \mapsto \ (u_j,u_i,u_k), 
\end{equation}
for the conventional choice $(u_1,u_2,u_3)=(0,1,t)$ used in \eqref{PVI}.

For $(x_1,x_2,x_3,a_1,a_2,a_3,a_\infty)=:(\mathbf{x},\mathbf{a})\in\complex^{7}$, formulae \eqref{braid} were extended by Iwasaki \cite{Iwasaki2003} to the general PVI($\kappa$): 
\begin{equation*}
    \beta_k \ : \ (\mathbf{x},\mathbf{a})\ \mapsto \ (\mathbf{x}',\mathbf{a}')
\end{equation*}
now acts on the full monodromy manifold \eqref{mon-mfd} as the identity map except for
\begin{equation}
\begin{cases}
    x'_i=\theta_j(\mathbf{a})-x_j-x_kx_i,\\x'_j=x_i,
\end{cases}\qquad
\begin{cases}
    a'_i=a_j,\\a'_j=a_i.
\end{cases}
\end{equation}
Notice that the action now affects the whole $\complex^7$, namely both sets of coordinates and parameters (cf. \eqref{braid} in which $\mu$ did not appear explicitly), and is singled out by the data it fixes: one monodromy matrix $M_k$ (with its trace $a_k$) together with the same-subscript reflection coordinate $x_k$. This drives our notation for the action's subscripts, which differs from Iwasaki's.

The above extension to the full PVI is still a superimposition; in order to isolate analytic continuation, one should absorb the $\mathfrak{S}_3$-action on the parameter subspace $(a_1,a_2,a_3)\in\complex^3$. Following Iwasaki's topological approach, this can be achieved by restricting to the pure braid (sub)group, i.e., squaring each braid generator to obtain
\begin{equation}
    \beta_k^2 \ : \ (\mathbf{x},\mathbf{a})\ \mapsto \ (\mathbf{x}'',\mathbf{a})
\end{equation}
for
\begin{equation}\label{beta2}
\begin{cases}
    x''_i=\theta_j(\mathbf{a}')-x_i-x_k(\theta_j(\mathbf{a})-x_j-x_kx_i)=\theta_i(\mathbf{a})-(1-x_k^2)x_i-(\theta_j(\mathbf{a})-x_j)x_k,\\x''_j=\theta_j(\mathbf{a})-x_j-x_kx_i,\\x''_k=x_k.
\end{cases}
\end{equation}
Notice that dependence \eqref{theta-as-a} implies $\theta_j(\beta_k(\mathbf{a}))=\theta_i(\mathbf{a})$.
In particular, this is a faithful action of $\Gamma(2)\simeq F_2$ \cite{PR2024}, the level 2 principal congruence subgroup, on a slice of the monodromy manifold selected by \emph{fixing} the local monodromy $\mathbf{a}=(a_1,a_2,a_3,a_\infty)$. Indeed, by direct computation  
\begin{equation*}
    \beta_3^2\beta_1^2\beta_2^2(\mathbf{x})=\mathbf{x}.
\end{equation*}
As any pure braid fixes each strand's endpoints, $\beta_i^2$ fixes all parameters. Moreover, the restriction to $\Gamma(2)$ narrows the focus to the \emph{structure} of analytic continuation which, by the Painlevé property, is indeed described as an action of \emph{loops} in $\pi_1(\overline{\complex}\setminus\{0,1,\infty\})$, the fundamental group of the three-punctured sphere, that measures monodromy.
Summarizing,
\begin{theorem}[{\cite[Theorem 6.5]{Iwasaki2003}}]
    The $\Gamma(2)$-action \eqref{beta2} on the monodromy manifold with fixed local data $a\in\complex^4$ is an explicit representation of the nonlinear monodromy of $\mathrm{PVI(\kappa)}$.
\end{theorem}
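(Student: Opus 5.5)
Since this statement is cited from \cite[Theorem 6.5]{Iwasaki2003}, the plan is to reconstruct it through the Riemann--Hilbert correspondence, in three stages.

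\emph{Stage 1: set up the isomonodromic dictionary.} Fix a base point $t_0\in\overline{\complex}\setminus\{0,1,\infty\}$. Realize $\mathrm{PVI}(\kappa)$ as the isomonodromic deformation of a rank-two Fuchsian system with poles at $u_1=0$, $u_2=1$, $u_3=t$ and $\infty$. The Riemann--Hilbert map then sends the space of initial conditions at $t_0$ (Okamoto's space) to the $SL_2(\complex)$ character variety of the four-punctured sphere with prescribed local traces $\mathbf{a}$. The standard Fricke trace coordinates $x_i=\mathrm{tr}(M_jM_k)$ identify this variety with the cubic \eqref{mon-mfd} with coefficients \eqref{theta-as-a}. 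Isomonodromy says this map is constant along solutions. Hence continuing a branch of $y(t)$ along a loop $\gamma$ based at $t_0$ corresponds to transporting the monodromy data along the induced deformation of the punctured sphere. Concretely, this is the mapping-class (Hurwitz) action of the braid swept out by the moving pole $u_3$.

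\emph{Stage 2: identify the group and the action.} Loops in $\overline{\complex}\setminus\{0,1,\infty\}$ are moves of the three finite poles in which every pole returns to its own position, that is, pure braids. The relevant group is the pure braid group modulo its centre $\langle(\beta_1\beta_2)^3\rangle$, and this quotient is $\Gamma(2)\simeq F_2$, generated by the squares $\beta_k^2$. Next I would compute the Hurwitz move $(M_i,M_j)\mapsto(M_j,M_j^{-1}M_iM_j)$ in trace coordinates, using the identity $\mathrm{tr}(AB)+\mathrm{tr}(A^{-1}B)=\mathrm{tr}A\,\mathrm{tr}B$ and its four-matrix analogues. This should recover Iwasaki's formulas for $\beta_k$, including the swap $a_i\leftrightarrow a_j$. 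Squaring then gives \eqref{beta2}; the relation $\theta_j(\beta_k(\mathbf{a}))=\theta_i(\mathbf{a})$ guarantees that $\mathbf{a}$ is fixed, so the action lives on a single slice. Finally I would check that $\beta_3^2\beta_1^2\beta_2^2$ acts trivially, matching the relation in $\pi_1$ of the three-punctured sphere, so that the assignment is a genuine representation of $\Gamma(2)$.

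\emph{Main obstacle.} The delicate step is the analytic one. One must know that the Riemann--Hilbert map is a bijection, or at least injective, from the Okamoto space onto the cubic slice, so that distinct branches really correspond to distinct points. This requires Okamoto's compactification and the properness of the map, and it covers resonant or reducible $\mathbf{a}$ only after passing to the semistable quotient. A second, more routine, difficulty is matching orientation and base-point conventions, since the braid subscripts here differ from Iwasaki's. I would treat the bijectivity as imported from Iwasaki's geometric argument and verify the conventions by checking the one-parameter case $\kappa_1=\kappa_2=\kappa_3=0$ against \eqref{braid} and \Cref{thm:branches}.
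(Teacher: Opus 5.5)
Your plan is essentially the paper's own justification around the citation: Iwasaki's braid action on the monodromy manifold (which you would rederive from Hurwitz moves in trace coordinates rather than import), squaring to pure braids so that $\mathbf a$ is fixed, the relation $\beta_3^2\beta_1^2\beta_2^2=1$, and the identification of loops in $\pi_1(\overline{\complex}\setminus\{0,1,\infty\})$ with $P_3/Z(P_3)\simeq\Gamma(2)$, with the Riemann--Hilbert input taken from Iwasaki. A few small corrections: $\beta_k^2$ fixes $\mathbf a$ simply because $\beta_k$ transposes $a_i,a_j$ (as the paper says, pure braids fix the strands), while $\theta_j(\beta_k(\mathbf a))=\theta_i(\mathbf a)$ only serves to write $x_i''$ in \eqref{beta2} through the slice's own $\theta$; the Riemann--Hilbert map is bijective only for generic $\mathbf a$ (for special $\mathbf a$ it is a resolution of the singular cubic, so what you actually need is its equivariance, and the ``semistable quotient'' remark should be dropped); and your Dubrovin--Mazzocco sanity check only works once the trace coordinates are written as $2-x_i^2$ in terms of their $x_i$, up to pair sign changes, rather than as a literal specialization of \eqref{braid}.
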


As it happens, there is an alternative realization of the very same action in the language of our cluster topography.
Specializing master rule \eqref{master-rule2} to the monodromy manifold \eqref{mon-mfd}, a mutation preserves parameters and all coordinates except the one sharing its subscript, which mutates as
\begin{equation*}
    x_i'=\mu_i(x_i)=\theta_i-x_i-x_jx_k=\frac{x_j^2+x_k^2-\theta_jx_j-\theta_kx_k+\theta_4}{x_i}.
\end{equation*}
It is then easy to realize that
\begin{equation}
    \mu_j = (ij)\beta_k
\end{equation}
for the purely combinatorial permutation
\begin{equation}
    (ij) \ : \ \begin{cases}
    M_i'=M_j,\\M'_j=M_i,\\M'_k=M_k,
\end{cases}
\end{equation}
swapping both local and global monodromy data indexed by $i,j$. This last operation manifests no topological effect and distills precisely the analytic continuation part of $\beta_k$ by compensating its permutation of poles.

Compared against braids, mutations simplify the action in that we can forget entirely about the dependence of the manifold's parameters $\theta$ on local monodromy $a$. On the flip side, squaring the actions now fails to express monodromy, being a mutation involutive by definition. Nevertheless, with little effort one finds inside $\langle\mu_1,\mu_2,\mu_3\ | \ \mu_1^2=\mu_2^2=\mu_3^2=1 \rangle$ a copy of the desired $\Gamma(2)\simeq F_2$ in the subgroup
\begin{equation}
    \langle \mu_{12},\mu_{23},\mu_{31} \ | \ \mu_{12}\mu_{23}\mu_{31}=1 \rangle,
\end{equation}
whose generators act none other than $\beta_k^2$:
\begin{equation}
\mu_{ij}:=\mu_i\mu_j \ : \ 
\begin{cases}
    x_i \ \mapsto \ \theta_i-x_i-\mu_j(x_j)x_k=\theta_i-(1-x_k^2)x_i-(\theta_j-x_j)x_k,\\x_j \ \mapsto \ \mu_j(x_j)=\theta_j-x_j-x_kx_i,\\x_k \ \mapsto \ x_k.
\end{cases}
\end{equation}
In other words, analytic continuation exhibits the (nontrivial) composition of pairs of mutations as elementary building block.
We have thus proven part A of \Cref{thm:intro}: 
\begin{theorem}\label{thm:anconLaurent}
    The procedure of analytic continuation of solutions to the sixth Painlevé equation $\mathrm{PVI}(\kappa)$, for any $\kappa\in\complex^4$, exhibits the Laurent phenomenon: starting from a local solution corresponding to the point $\mathbf{x}^0=(x^0_1,x^0_2,x^0_3)\in(\complex^*)^3$ on the monodromy manifold, any other branch will correspond to a point whose coordinates express as Laurent polynomials of the initial $\mathbf{x}^0$.
\end{theorem}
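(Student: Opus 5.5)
The plan is to reduce the statement to the general Laurentness established in \S\ref{sec:master} for the master LP seed. The monodromy manifold \eqref{mon-mfd} is a specialization of the master equation \eqref{master-eq}. Concretely, after rescaling so that the cubic term has coefficient $\tau$ on the correct side, set $\delta_i=0$, $\sigma_i=-\theta_i$, $\zeta=\theta_4$ and $\tau=-1$; equivalently, one can check the sign by comparing with the local rule \eqref{master-rule}, which must reproduce $x_i'=\theta_i-x_i-x_jx_k$.

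With these choices the exchange polynomials become
\begin{equation*}
    F_i=x_j^2+x_k^2-\theta_jx_j-\theta_kx_k+\theta_4,
\end{equation*}
with coefficients in the ground ring $\complex$. The LP computation of \S\ref{sec:master} was noted to hold for any specialization of $\zeta,\sigma_i,\delta_j,\tau$, so every seed reached by mutation has the same exchange polynomials $\mathbf{F}$. One should still confirm that these specialized $F_i$ meet the LP seed axioms (irreducibility, and $F_i$ not divisible by any $x_j$). If irreducibility fails for special $\theta$, the plan is to pass to the generic parameters and specialize only at the end, since Laurentness survives specialization of coefficients.

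Given this, \cite[Theorem 5.1]{LP2016} shows that every cluster variable obtained by any finite sequence of $\mu_1,\mu_2,\mu_3$ from the initial seed $(\mathbf{x}^0,\mathbf{F})$ is a Laurent polynomial in $x_1^0,x_2^0,x_3^0$. The next step is to link this to analytic continuation. The computation preceding the statement gives $\mu_{ij}=\mu_i\mu_j$ acting as $\beta_k^2$, so by Iwasaki's theorem the $\Gamma(2)$-action \eqref{beta2} is generated by these products of mutations.

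Every branch of the given local solution is therefore obtained from the initial one by a word in the $\beta_k^2$. Each such word is a word in the mutations, so its coordinates are Laurent in $\mathbf{x}^0$. The map from formal Laurent polynomials to numbers is evaluation at $\mathbf{x}^0\in(\complex^*)^3$, which is well defined because no denominators vanish: the denominators are monomials in the initial coordinates.

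The step I expect to be the main obstacle is justifying the passage from the formal LP algebra to actual points on the surface. Two things must be checked:
\begin{itemize}
\item the mutation formula $F_i/x_i$ coincides with the polynomial local rule $\theta_i-x_i-x_jx_k$ only on the cubic \eqref{mon-mfd}, and this is guaranteed by \eqref{sol-preserved};
\item the formal Laurent expressions evaluated at $\mathbf{x}^0$ agree with iterating the polynomial braid action.
\end{itemize}
For the second point, I would argue by induction on word length. Mutation preserves solutions, so at each step the rational and polynomial formulas agree on the surface.

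A caveat is that intermediate coordinates may vanish, making $F_i/x_i$ undefined. This is harmless, because the Laurent expression in the initial variables is a polynomial identity valid on a Zariski-dense subset of the surface, and hence everywhere by continuity.
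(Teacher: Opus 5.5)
Your proposal is correct and follows the paper's own argument. You specialize the master LP seed to the monodromy manifold ($\delta_i=0$, $\sigma_i=-\theta_i$, $\zeta=\theta_4$, $\tau=-1$), invoke the Laurent phenomenon from \S\ref{sec:master}, and use that $\mu_{ij}=\mu_i\mu_j$ realizes Iwasaki's $\beta_k^2$, so every branch is reached by a word in mutations. Your extra checks go beyond what the paper makes explicit without changing the route. The irreducibility concern is genuine, since $F_i$ factors exactly when $4\theta_4=\theta_j^2+\theta_k^2$, and your generic-then-specialize fix handles it; you also justify evaluating the formal Laurent expressions on the surface when intermediate coordinates vanish.
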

\begin{figure}[t]
    \centering
    \includegraphics[width=0.325\linewidth]{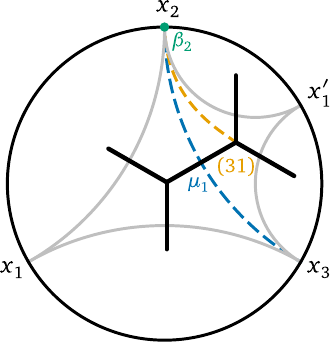}
    \caption{Topographic comparison between a mutation and a braid move. The former is visualized by an edge-reflection (with axis dotted in {\color{3blue}blue}) while the latter manifests as a rotation pivoted in the {\color{3green}green} center. Visually, the two moves are clearly intertwined by the {\color{3ochre}yellow} reflection $(31)$. Such permutation---and thus the rotation---eludes the symmetry group of this ``monodromic'' cluster topography in that all parameters $\theta$ are assumed to be generic.} 
    \label{fig:braidvsmut}
\end{figure}
We can visually interpret the process of analytic continuation in topographic terms: triangles correspond to monodromy triples, flips to analytic continuation and pairs of flips to genuine branch jumps.
Starting from any local solution, all its branches can be reached through sequences of such pairs and, as a whole, they populate half of the topograph's vertices---which can be though of as one part of a bipartite 3-regular tree. In particular, vertices of the other part can be thought of as ``half-monodromy'' around the critical points. 
Notice that a triangle lifts here no element of $\mathfrak{S}_3$: having generic parameters, the monodromy manifold exhibits no permutation-type symmetry on its coordinates. See \Cref{fig:braidvsmut} for a more extensive topographic dictionary.

\begin{remark}
    On the one hand, \Cref{thm:anconLaurent} extends to the whole parametric PVI the statement in \cite[§6.1]{CMR2016} which, limited by the CS framework, was claimed just under the $\kappa_4=1$ restriction.
    On the other hand, the argument supporting such restricted statement appears to be affected by a structural typo: formulae (6.54-56) are both involutive and easily checked not to preserve the monodromy manifold, undermining the meaning of the actions $\Tilde{\beta}_i$ for analytic continuation. Nevertheless, $\Tilde{\beta}_i$ turns out to match $\mu_i$ so that the present treatment fills the potential gap.
\end{remark}

\begin{remark}
In the spirit of confluence, we briefly explore the extent to which our cluster picture extend to other Painlevé equations---starting with PV, whose monodromy manifold can be easily obtained via the following rescalings:
\begin{equation}
\begin{aligned}
        &x_1 \ \mapsto \ \frac{x_1}{\epsilon}, \quad x_2 \ \mapsto \ \frac{x_2}{\epsilon},\\
        &\theta_1 \ \mapsto \ \frac{\theta_1}{\epsilon}, \quad \theta_2 \ \mapsto \ \frac{\theta_2}{\epsilon}, \quad \theta_3 \ \mapsto \ \frac{\theta_3}{\epsilon^2}, \quad \theta_4 \ \mapsto \ \frac{\theta_4}{\epsilon^2}.
    \end{aligned}
\end{equation}
Indeed, the dominant terms in the limit $\epsilon\rightarrow0$ result in the expected cubic
\begin{equation}\label{PV-mon-mfd}
    x_1x_2x_3+x_1^2+x_2^2-\theta_1x_1-\theta_2x_2-\theta_3x_3+\theta_4=0.
\end{equation}
Essentially, the triplet $x_1^2+x_2^2+x_3^2$ loses the third monomial without affecting the manifold's cubic term, preventing $\mu_3$ from surviving the process.
Indeed, one can easily check that only $\mu_1$ and $\mu_2$ keep preserving the cubic, provided all terms in $x_3$ are removed form their exchange polynomials.
In particular, only $\mu_{12}$ survives from the nonlinear monodromy, which descends to a mere (cyclic) action of $\integer$.
This echoes the following fact: the analytic counterpart of the above limiting procedure is a coalescence of poles, which triggers the Stokes phenomenon. Monodromy can only be a faithful measure at the unaffected pole, as the whole structure of analytic continuation breaks ``tameness'' by requiring rational maps \cite{PR2024}.

Finally, apart from PIII---whose three Dynkin types $D^{(1)}_{6,7,8}$ can each be obtained via parametric specialization of \eqref{PV-mon-mfd}---the other Painlevé equations degenerate further quadratic monomials and thus prevent \emph{any} generator of $\Gamma(2)$ from descending. 
\end{remark}   

\subsection{Discriminant equation and reduction}\label{sec:quadratic}

We now turn our attention to Conway's original topography, unveiling first the cluster nature of its arithmetic progression. For any $\mathbf{v},\mathbf{w}\in\integer^2$, local rule \eqref{Conway-rule} disguises the defining property
\begin{equation}\label{parallelogram}
    q(\mathbf{v}+\mathbf{w})+q(\mathbf{v}-\mathbf{w})=2(q(\mathbf{v})+q(\mathbf{w}))
\end{equation}
of a quadratic form $q:\integer^2\rightarrow\integer$ \cite{Veselov2023}. Indeed, integral face labels in arithmetic progression can be further interpreted as values of a binary quadratic form by going metric: meant now as the dual of the \emph{hyperbolic} Farey tessellation on the disk
\begin{equation}\label{Poin-disk}
    \mathbb{D}:=\left\{\frac{\mathtt{i}-z}{\mathtt{i}+z}\,:\,z\in\overline{\mathbb{H}}=\mathbb{H}\cup\real\cup\{\infty\}\right\},
\end{equation}
the topograph inherits an additional (rational) labeling by Farey fractions $\frac{r}{s}\in\rational$ that connects with the integral one via evaluation $q(r,s)\in\integer$.
See \Cref{fig:evaluation} for additional details expressed in topographic terms.
In particular, our reference triangle now labeled by vertices $\{\frac{1}{0},\frac{1}{1},\frac{0}{1}\}$, namely $1=r=u$ and $0=s=t$, induces the unique form 
\begin{equation}\label{init-form}
    q(x,y)=wx^2+\delta xy+ey^2, \qquad \delta:=n-(e+w),
\end{equation}
such that $q(1,0)=w,\,q(1,1)=n,\,q(0,1)=e$. In light of its recursive constructing recipe driven by $\delta$-arithmetic progression, we denote such topograph as $\pazocal{T}_q$.

\begin{figure}[t]
    \centering
    \includegraphics[width=0.4\linewidth]{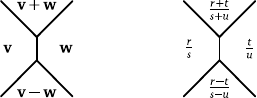}
    \caption{Local structure of Farey fractions via the signature Farey addition: vectorial on the left, projective on the right. For $\{\mathbf{e_1},\mathbf{e_2}\}$ giving a $\integer$-basis, $\mathbf{v}=r\mathbf{e_1}+s\mathbf{e_2}$ and $\mathbf{w}=t\mathbf{e_1}+u\mathbf{e_2}$. Taking evaluations, the resulting integral values $q(r-t,s-u),\,q(r,s)+q(t,u),\,q(r+t,s+u)$ are precisely in arithmetic progression as a consequence of \eqref{parallelogram}.} 
    \label{fig:evaluation}
\end{figure} 

Under specialization \eqref{Con-spec}, the master equation reduces to what we refer to as the \textbf{discriminant equation}
\begin{equation}\label{disc}
    x_1^2+x_2^2+x_3^2-2x_1x_2-2x_2x_3-2x_3x_1=\Delta.
\end{equation}
Given a topograph $\pazocal{T}_q$, it is indeed easy to check that $(w,n,e)$ solves such equation provided the master parameter matches the form's discriminant, namely $\tau=\delta^2-4ew=:\Delta$.
In particular, Conway rule is finally turned cluster as
\begin{equation}\label{q-mutation}
    n' = s=2(w+e)-n\overset{\eqref{disc}}{=\joinrel=}\frac{w^2+n^2+e^2-2ew-\Delta}{n}-n=\frac{w^2+e^2-2ew-\Delta}{n}.
\end{equation}
The rest of the paper rethinks in cluster terms the state of the art of this original topography for quadratic forms \cite{Sullivan2025}.
More precisely, we see first that $\pazocal{T}_q$ is in fact best understood as $\pazocal{T}_{[q]}$ and later reformulate the theory of reduction via mutation \eqref{q-mutation}.
Hereafter, a form is always assumed to be quadratic, binary, and integral.

\subsubsection{$\mathrm{PGL}$-equivalence}\label{sec:equiv}

Following Conway \cite[Pages 4-5]{Conway1997}, we start with
\begin{definition}
    Two forms $q_1$ and $q_2$ are \textbf{equivalent} if and only if
    \begin{equation*}
        q_1(x,y)=q_2(rx+ty,sx+uy)
    \end{equation*}
    for an invertible integral matrix
    \begin{equation*}
        M=\begin{pmatrix}r & t\\s & u\end{pmatrix}\in\mathrm{GL}_2(\integer).
    \end{equation*}
    In shorthand notation, $q_1 \sim q_2 \iff q_1=q_2 \circ M$. When additionally $\mathrm{det}(M)=1$, forms are said to be \textbf{strict-equivalent}.
\end{definition}
Seen again as a function $\integer^2\rightarrow\integer$, a form is quadratic on scalars allowing to narrow the group of equivalence down to
\begin{equation}
   \mathrm{PGL}_2(\integer)=\langle S,T,U \ | \ S^2=T^2=U^2=(SU)^2=(UT)^3=1\rangle.
\end{equation}
Let $q(x,y)=wx^2+(n-e-w)xy+ey^2$ denote a form of discriminant $\Delta$, thought of as an initial \emph{ordered} cluster $(w,n,e)$ solving its discriminant equation. In fact, any other cluster $(a,b,c)\in\integer^3$ is---as the set $\{a,b,c\}$---a triplet of values attained by $q$ as much as a distinct representative form $ax^2+(b-a-c)xy+cy^2\in[q]$. In particular, the equivalence class $[q]$ can be navigated via the following \emph{orientation-reversing} action by cluster transformations, namely (compositions of) mutations and permutations:
\begin{equation}
    \begin{matrix}
        \hfill \mathrm{PGL}_2(\integer) \hspace{.25em} & \rightarrow & \mathrm{Aut}(\pazocal{T}_{[q]}) \\[.5em]
        S=\begin{pmatrix}
            -1 & 0\\0 & 1
        \end{pmatrix} & \mapsto & \mu_2 & : & (a,b,c)  \mapsto (a,b'\!,c) \\
        T=\begin{pmatrix}
            -1 & 1\\0 & 1
        \end{pmatrix} & \mapsto & (23) & : & (a,b,c)  \mapsto (a,c,b) \\
        U=\begin{pmatrix}
            0 & 1\\1 & 0
        \end{pmatrix} & \mapsto & (13) & : & (a,b,c)  \mapsto (c,b,a)
    \end{matrix}
\end{equation}
If we restrict to $V:=UT$ and $E:=SU=US$, we match
\begin{equation}
        \mathrm{PSL}_2(\integer)=\langle E,V \ | \ E^2=V^3=1\rangle
    \end{equation}
with \emph{orientation-preserving} action
\begin{equation}
    \begin{matrix}
        \hfill \mathrm{PSL}_2(\integer) \hspace{.25em} & \rightarrow & \mathrm{Aut}(\pazocal{T}_{[q]}) \\[.5em]
        E=\begin{pmatrix}
            0 & -1\\1 & 0
        \end{pmatrix} & \mapsto & (13)\mu_2 & : & (a,b,c)  \mapsto (c,b'\!,a) \\
        V=\begin{pmatrix}
            0 & 1\\-1 & 1
        \end{pmatrix}\hfill & \mapsto & (123) & : & (a,b,c)  \mapsto (c,a,b)    
    \end{matrix}
\end{equation}
For the sake of completeness, let us observe that
\begin{equation}\label{EUV}
    \mathrm{PGL}_2(\integer)=\langle E,U,V\ | \ E^2=U^2=(EU)^2=(UV)^2=V^3=1\rangle,
\end{equation}
for $\langle U,V\rangle\simeq\mathfrak{S}_3$, while also
\begin{equation}
    \mathrm{PGL}_2(\integer)=\langle E,S,V\ | \ E^2=S^2=V^3=(ESV)^2=1\rangle.
\end{equation}
Given that the symmetric group in three elements can be presented as
\begin{equation}
    \mathfrak{S}_3=\langle X,Y \ | \ X^2=Y^3=(XY)^2=1 \rangle \ \simeq \ \mathrm{PSL}_2(\integer_2),
\end{equation}
mutation manifests by breaking the last relation but, reversing orientation on clusters much like it happens with the Markov quiver, the structure it induces trespasses $\mathrm{PSL}_2(\integer)=\integer_2*\integer_3$ into $\mathrm{PGL}_2(\integer)$. This refines the $\mathrm{SL}_2(\integer)$ approach taken by O'Sullivan \cite{Sullivan2025}: the cluster formalism naturally manifests the \emph{full} symmetry group of the topograph \cite[Page 33]{Conway1997}.

In \cite[Theorem 3.2]{Sullivan2025}, a form \eqref{init-form} is directly associated with its triplet of coefficients in notation $[w,\delta,e]$ and visualized on the topograph as an oriented edge labeled by $\delta$, see Figure 4 (b). In particular, each edge admits two orientations corresponding to the pair of strict-equivalent forms $[w,\delta,e]$ and $[e,-\delta,w]=[w,\delta,e]\circ E$.
Instead of involving edge labels, the cluster language sticks with the topograph's intrinsic face labels, identifying as above each form $q(x,y)\in[q]$ with the one cluster singled out by the standard values $q(1,0),\ q(1,1),\ q(0,1)$.
It follows that, at each vertex of $\pazocal{T}_{[q]}$ with respect to the compass local structure (\Cref{fig:compass}), the three \textbf{cardinal clusters} of type $(w,n,e)$ are in bijection with inward edges $\delta$ via the elementary relation
\begin{equation}
    \delta=n-e-w.
\end{equation}
Moreover, under full equivalence a vertex now hosts additional \textbf{anticardinal clusters} of type $(e,n,w)$, doubling the total amount to six forms. For the purpose of visualization, we split the edges of $\pazocal{T}_{[q]}$ in two halves, each one endowed with an arrow at both ends.
Thus, each vertex acquires six incident arrows, of which three inward and three outward. Each inward(outward) arrow singles out a unique form by taking the label of the $\infty$-gon it flows in(out) as northern cluster variable $n$.
An example is given in the left of \Cref{fig:maneuvers} which, in the right, visualizes the $\mathrm{PGL}_2(\integer)$-action as maneuvers in this ``decorated'' topography. Through straights and turns, these elementary moves allow to ride the topograph as an endless roadmap, connecting any two forms in the given equivalence class. In particular, the composition of matrices $S,T,U$, which is read off from the directions guiding $q_1$ to $q_2=q_1 \circ M$, results precisely in $M\in\mathrm{PGL}_2(\integer)$. 

\begin{figure}[!t]
    \centering
    \includegraphics[width=.825\linewidth]{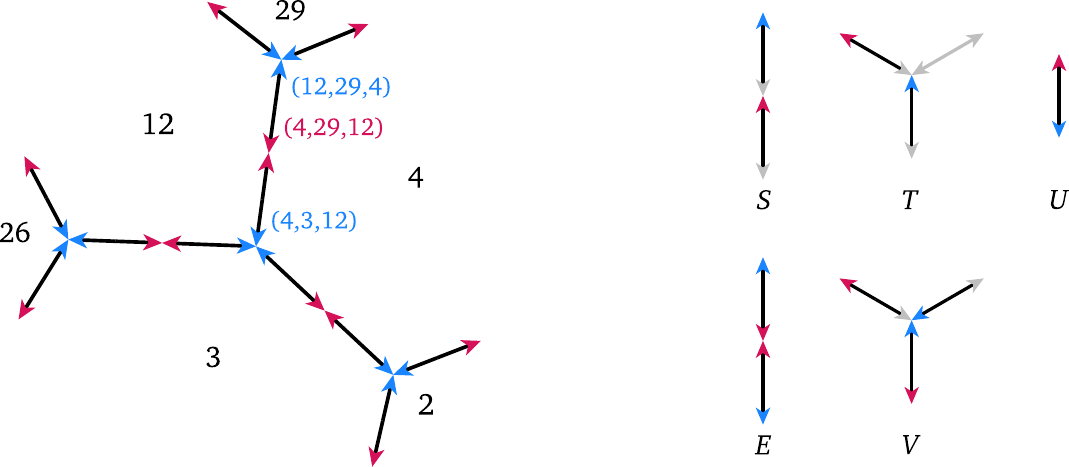}
    \caption{In the left, visualization of a topograph's equivalence class of forms, with cardinal clusters in {\color{2blue}blue} and anticardinal clusters in {\color{2red}red}. In the top right, elementary color-switching maneuvers induced by the generators of $\mathrm{PGL}_2(\integer)$: $S$ goes straight to the other half of the edge, $T$ turns at the vertex, and $U$ gives the U-turn. Analogously, the bottom right gives the color-preserving maneuvers from $\mathrm{PSL}_2(\integer)$: $E$ rotates by $\pi$ around the tail, $V$ rotates by $\pm\frac{2\pi}{3}$ around the head.}
    \label{fig:maneuvers}
\end{figure}

For completeness, the subgroup 
\begin{equation}
    \langle\mu_1,\mu_2,\mu_3\rangle \ \simeq \ \integer_2*\integer_2*\integer_2
\end{equation}
can be given a $\mathrm{PGL}_2(\integer)$ realization 
\begin{equation}
    \begin{matrix}
        \hfill \mathrm{PGL}_2(\integer) & \rightarrow & \mathrm{Aut}(\pazocal{T}_{[q]}) \\[.5em]
        VSV^2=\begin{pmatrix}
            1 & 0\\2 & -1
        \end{pmatrix}\hfill & \mapsto & \mu_1 & : & \hspace{.275em}(a,b,c)  \mapsto (a',b,c)
        \\
        V^2SV=\begin{pmatrix}
            -1 & 2\\0 & 1
        \end{pmatrix} & \mapsto & \mu_3 & : & (a,b,c)  \mapsto (a,b,c')
    \end{matrix}
\end{equation}
together with
\begin{equation}
    \langle\mu_{12},\mu_{23},\mu_{31}\rangle \ \simeq \ F_2,
\end{equation}
which recovers the standard generators for $\Gamma(2)$:
\begin{equation}
    \begin{matrix}
        \hfill \mathrm{PSL}_2(\integer) & \rightarrow & \mathrm{Aut}(\pazocal{T}_{[q]}) \\[.5em]
        \begin{pmatrix}
            1 & 0\\2 & 1
        \end{pmatrix} & \mapsto & \mu_{12}:=\mu_1\mu_2 & : & \hspace{.275em}(a,b,c)  \mapsto (a'',b',c)  \\
        \begin{pmatrix}
            1 & 2\\0 & 1
        \end{pmatrix} & \mapsto & \mu_{32}=\mu_{23}^{-1} & : & (a,b,c)  \mapsto (a,b',c'')
    \end{matrix}
\end{equation}
\begin{remark}
    Notice the ordering of tuples $(a,b,c)$ is crucial to distinguish forms within a class. This is another striking example, again coming from a concrete manifestation (number theory) of cluster phenomena much as it happened in \cite{DalMartello2024} (mathematical physics), that invites to reconsider clusters as truly ordered objects.
\end{remark}

\subsubsection{Cluster reduction}\label{sec:clustred}

In the following, we refer to O'Sullivan's uniform reduction algorithm \cite[\S6]{Sullivan2025} based on general continued fraction expansions as the \textbf{fractional algorithm}.
As was already observed by O'Sullivan in terms of the topograph's \emph{edges}, a reduced form visually manifests as a local peculiarity in topography: the well ($\Delta<0$), the lake ($\Delta=0$), the pair of river mouths ($0<\Delta=k^2$), or the cycle of river bends ($0<\Delta \neq k^2$). See \Cref{fig:local} for a visualization of these concepts in terms of the topograph's \emph{vertices}: the $\mathrm{PGL}$-viewpoint indeed invites to consider reduced representatives as \emph{unordered} clusters, visualized by vertices $\{n,e,w\}$ encompassing both possible orientations.
\begin{figure}[!t]
    \centering
    \includegraphics[width=.85\linewidth]{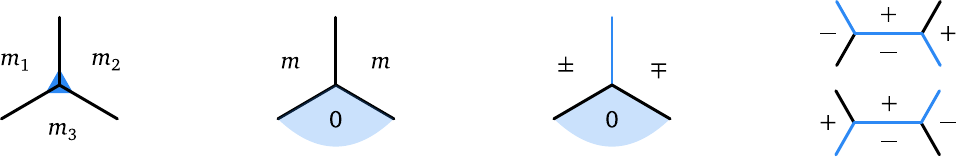}
    \caption{Bestiary of reduced vertices. From left to right: the well, with the three minimal values in the topograph; the lake, with $m$ the minimal value in the topograph; the pair of river mouths; the two types of river bends, each consisting of two adjacent vertices $v^l$ and $v^r$. In particular, {\color{2blue}blue} edges indicate the topograph's river, which we display with positive labels on top (cf. \Cref{fig:eg4}) allowing to tell mouths as well as a bend's vertices apart between left and right.}
    \label{fig:local}
\end{figure}

Echoing the fractional algorithm \cite[Definition 6.1]{Sullivan2025}, the directions guiding a form within its equivalence class to a reduced vertex are encoded by the associated root:
\begin{definition}
    Given a form $q=(a,b,c)$ of discriminant 
    \begin{equation*}
        \Delta=(b-a-c)^2-4ac=\delta^2-4ac,
    \end{equation*}
    the \textbf{root} of $q$ is the number $z_q\in\overline{\mathbb{H}}$ solving $q(z_q,1)=0$. Explicitly,
    \begin{equation}
        z_q:=\begin{cases}
            \dfrac{\sqrt{\Delta}-\delta}{2a}, \hspace{2.7em} a\neq 0,\hfill\\[1em]
            \hspace{.75em}-\dfrac{c}{\delta}, \hspace{4em} a=0\mathrm{\,\,and\,\,}\delta>0,\\[1em]
            \hspace{1.25em}\infty, \hfill a=0\mathrm{\,\,and\,\,}\delta\leq 0.
        \end{cases}
    \end{equation}
\end{definition}
Unlike the fractional algorithm, which is based on the root's continued fraction expansion, the cluster framework needs a dual codification in terms of snake graphs.
The information encoded by this zigzagging graph turns out to be twofold: the undulation pattern prescribes the sequence of mutations leading to the reduced vertex, while the counts of perfect matchings recover the Farey triangle $z_q$ belongs to. We give a preliminary
\begin{definition}
    Let $z\in\mathbb{H}\cup\real$. Then, $\mathrm{sgn}(z):=\mathrm{sgn}(\mathfrak{Re}(z))$ is the \textbf{sign} of $z$ and
    \begin{equation}
        \chi(z):=\begin{cases}
            1, \qquad \mathfrak{Re}(z)<0,\\
            0, \hfill \mathfrak{Re}(z)\geq0.
        \end{cases}
    \end{equation}    
\end{definition}

In order to cover all scenarios, namely any $\Delta\in\integer$, we give a \emph{geometric} recipe to construct the snake graph valid for any root $z_q\in\overline{\mathbb{H}}$.

Start from the hyperbolic Farey tessellation by tracing the arc connecting the disk's center $\mathtt{i}\in\mathbb{H}$ with the given root. This induces the \emph{oriented} geodesic from $\pm1$ that terminates on the Farey sum of the triangle the root belongs to (\Cref{fig:split-triangle}), with sign determined so to have the imaginary axis $\mathtt{i}\real_{>0}$ cut. In the two ambiguous cases $z_q\in\{0,\infty\}$, choose $-1$. 

Then, the snake graph acquires a diamond tile $G_k$ (\Cref{fig:diamond}) for each Farey edge $\tau_k$ the geodesic cuts, for the imaginary axis labeled as initial diagonal $\tau_0$, and the gluing rule is that of alternating relative orientation \cite[§4]{MSW2011}.
In a nutshell, one glues two overlapping tiles $G_i$ and $G_{i+1}$ alternately along the common edge or the diagonal $\tau_{i+1}$, starting with the former.
Equivalently, one unfolds the triangulated polygon singled out by the geodesic as if the snake had been embedded in the Farey tessellation folded on itself repeatedly along the inner edges---meaning that the unfolding happens \emph{southward} for a negative root (e.g. \Cref{fig:unfolding}).
In particular, the snake graph is \emph{infinite} when $z_q\in\real\setminus\rational$ while both $0$ and $\infty$ degenerate their only tile into a single tilted edge (the geodesic itself, respectively eastern and western).
Note that the resulting snake graph $\pazocal{G}(z_q)$ fits the standard literature \cite{CS2018}, up to a $45^\circ$ tilting, only when the root is positive: when negative, $G_0$ is the top tile instead of the conventional one at the bottom. In fact, our construction is, as an abstract object independent of geometry, richer than a usual snake graph (cf. \Cref{def:rattlesnake}). 

\begin{figure}[!t]
    \centering
    \includegraphics[width=.45\linewidth]{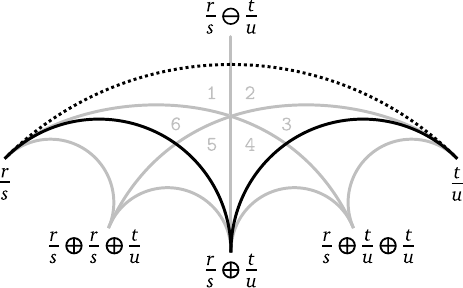}
    \caption{The six-zones splitting construction of a Farey triangle: $\frac{r}{s}\oplus\frac{t}{u}=\frac{r+t}{s+u}$ denotes the Farey addition while $\frac{r}{s}\ominus\frac{t}{u}=\frac{r-t}{s-u}$ denotes the Farey subtraction, the latter well-defined provided the sign of the resulting fraction is attributed to the numerator. With the exclusion of the dotted top edge (unless $r=u=0$ and $s=t=1$), the geodesic induced by a root $z_q\in\mathbb{H}$ on the triangle must terminate in the vertex $\frac{r}{s}\oplus\frac{t}{u}$. When $z_q\in\real$, the geodesic simply terminates on the root itself.}
    \label{fig:split-triangle}
\end{figure}

\begin{figure}[!t]
    \centering
    \includegraphics[width=.4\linewidth]{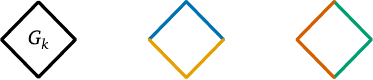}
    \caption{From left to right: the building block of a snake graph, the diamond tile; a tile's \textbf{northern} edges in {\color{3blue}blue} and \textbf{southern} edges in {\color{3ochre}yellow}; a tile's \textbf{western} edges in {\color{3red}red} and \textbf{eastern} edges in {\color{3green}green}. Observe that tiles are oriented like squares in the standard literature, e.g. \cite[\S2.2]{Ovenhouse2023} provided the letter $L$ in our treatment is translated to an ``upward'' $U$.}
    \label{fig:diamond}
\end{figure}

\begin{figure}[!t]
    \centering
    \includegraphics[width=.975\linewidth]{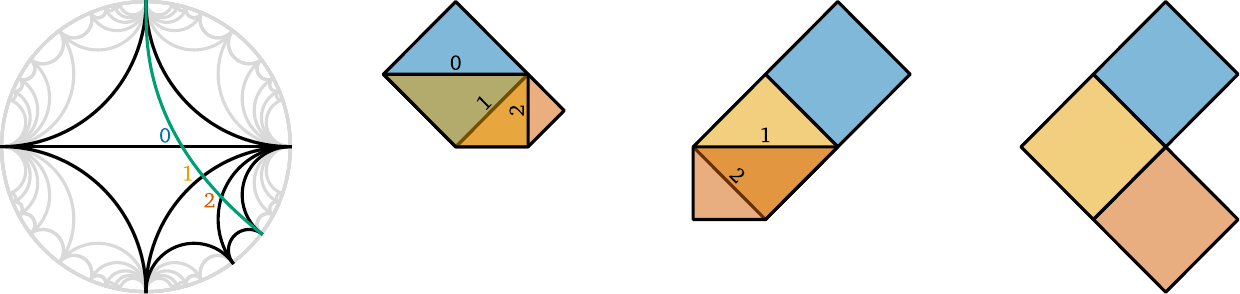}
    \caption{Unfolding the triangulated polygon singled out by the {\color{3green}green} geodesic that terminates in $\nicefrac{-1}{3}$: after ``rectifying'' the polygon's Farey edges, one unfolds first along $\tau_0$, revealing the snake graph's initial {\color{3blue}blue} tile $G_0$, and then along $\tau_1$ to complete the snake graph with word $W(\pazocal{G}(\nicefrac{-1}{3}))=SLR$.}
    \label{fig:unfolding}
\end{figure}

Next, encode the shape of $\pazocal{G}(z_q)$ via the following \textbf{word} in alphabet $\{L,R,S\}$: 
\begin{equation}\label{W(G)}
    W(\pazocal{G}(z_q)):=S^{\chi(z_q)}w,
\end{equation}
for $w$ a word in alphabet $\{L,R\}$ codifying whether a tile is glued to the next westward ($L$) or eastward ($R$) along the geodesic.
In particular, the word is empty at $\infty$ or when $0\leq\mathfrak{Re}(z_q)\leq1$. 

Finally, extract the sequence of mutations as follows: starting with $2$ as initial index $i\in\integer_3$,
\begin{equation}
    \begin{matrix}
        S & \mapsto & \mu_{i},\\
        L & \mapsto & \mu_{i-1},\\
        R & \mapsto & \mu_{i+1}.\\
    \end{matrix}
\end{equation}
\Cref{ex:negative} gives a detailed run of the resulting \textbf{cluster algorithm} for a negative complex root.

\begin{definition}
    Given a form $q$ of discriminant $\Delta$, the \textbf{tuple of reduced vertices} for the topograph $\pazocal{T}_{[q]}$ consists in the following data: a singleton $\{m_1,m_2,m_3\}$ or $\{m,0,m\}$ when $\Delta\leq0$, a $2$-tuple $(\{-,0,+\},\{+,0,-\})$ with left mouth first and right mouth last when $\Delta=k^2$, a $n$-tuple $(v^l_1,v^r_1;v^l_2,v^r_2;\ldots;v^l_n,v^r_n)$ with bends separated by semicolons when $0<\Delta\neq k^2$. 
\end{definition}

Before proving the cluster algorithm's output is precisely such tuple, we need the following
\begin{definition}
    Let $w=w_1 \cdots w_k$ be a word in the alphabet $\{L,R\}$ and $\overline{L}:=R$, $\overline{R}:=L$.
    The word
    \begin{equation}
        \overline{w}:=\overline{w}_1\overline{w}_2\overline{w}_3\overline{w}_4 \cdots \overline{w}_k
    \end{equation}
    is the \textbf{opposite} of $w$.  
    Depending on parity, the word 
    \begin{equation}
        w^*:=\begin{cases}
            \overline{w}_1w_2\overline{w}_3w_4 \cdots \overline{w}_{2n-1}w_{2n},\\
            \overline{w}_1w_2\overline{w}_3w_4 \cdots w_{2n}\overline{w}_{2n+1},
        \end{cases}
    \end{equation}
    is the \textbf{dual} of $w$.
    In particular, the word $w_*:=\overline{w}^*$ gives the \textbf{codual} of $w$. 
\end{definition}

\begin{theorem}\label{thm:reducLaurent}
    Let $q=(w,n,e)$ be a form with nonpositive discriminant. Then, the sequence of mutations attached to its root $z_q$ terminates on the well or the unique lake. When $0<\Delta\neq k^2$, the (infinite) sequence runs along the periodic river cycling through the river bends. When $\Delta=k^2$, the sequence may halt (at the left mouth) or terminate at a lake. In the latter case, an additional alternating sequence of mutations canonically reaches the right mouth.
\end{theorem}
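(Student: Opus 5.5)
The plan is to transport the geometric content of the snake-graph construction to the topograph through the dictionary of \S\ref{sec:equiv}: a form $q\circ M$ is the cluster $(q(\mathbf{v}_1),q(\mathbf{v}_1+\mathbf{v}_2),q(\mathbf{v}_2))$, where $\mathbf{v}_1,\mathbf{v}_2$ are the columns of $M$. The first step is an invariance lemma. Mutating $\mu_i$ at a cluster attached to a Farey triangle reflects that triangle across the Farey edge opposite the vertex labelled by $x_i$. The root therefore transforms by the corresponding M\"obius map, $z_{q\circ M}=M^{-1}\cdot z_q$.

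Concretely, the snake graph records the Farey edges $\tau_0,\tau_1,\ldots$ crossed by the geodesic from $\pm1$ through $\mathtt{i}$ toward $z_q$. The cluster $(w,n,e)$ sits on the reference triangle $\{\frac{1}{0},\frac{1}{1},\frac{0}{1}\}$. I would check two things by induction on the length of the word. First, the letter $S$ (present exactly when $\chi(z_q)=1$) is the initial reflection $\mu_2$ across $\tau_0$, the imaginary axis, into the half-plane containing a negative root. Second, each subsequent tile glued westward or eastward corresponds to leaving the current triangle through its left or right edge as seen along the geodesic. Since the index of the incoming edge shifts by one in $\integer_3$ at each step, this gives $L\mapsto\mu_{i-1}$ and $R\mapsto\mu_{i+1}$, with the alternating gluing rule of \cite[§4]{MSW2011} matching the alternation of orientation on clusters. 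The upshot is that the mutation sequence walks the topograph's vertices dual to the triangles met by the geodesic, ending at the triangle containing $z_q$ (or at $z_q$ itself if rational).

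Next, I would identify that terminal vertex case by case, using the sign behaviour of $q$ on Farey fractions. The sign of $q(r,s)$ is that of $(\frac{r}{s}-z_q)(\frac{r}{s}-\bar z_q)$, suitably interpreted.

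For $\Delta<0$, the final triangle contains $z_q\in\mathbb{H}$. Since $\sqrt{|\Delta|}$ grows with distance from the root in the hyperbolic metric, the three face values there are minimal, and Conway's climbing lemma gives the well. For $\Delta=0$, the root is a rational $\frac{r}{s}$ with $q(r,s)=0$, and the geodesic terminates at that vertex of the lake. For $0<\Delta=k^2$, both roots are rational and the path halts at the triangle whose vertex realizes the left mouth, where one face value is $0$ and the neighbours have opposite signs. If instead the path reaches the zero face from the lake side, the two roots are joined by the finite chain of triangles around that zero face. Alternating $\mu$'s around the $0$-labelled face (a $\mu_{ij}$-type sequence with $x_k=0$ fixed) then march along the boundary of that face to the right mouth. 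For $0<\Delta\neq k^2$, the root is a real quadratic irrational, so the snake graph and word are infinite. The vertices visited are exactly those whose triangles separate $z_q$ from $\bar z_q$, i.e.\ carry positive and negative labels, which is the river. The eventual periodicity of the continued fraction of a quadratic irrational (Lagrange) gives the periodic river cycling through the bends $v^l_i,v^r_i$.

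I expect the main obstacle to be the first step: a careful verification that the $L/R$ convention, together with the index bookkeeping $i\mapsto i\pm1$ and the southward unfolding for negative roots, matches the left/right exits of the geodesic. I would try first to check this on the base cases $\frac{-1}{3}$ (as in \Cref{fig:unfolding}) and a positive fraction, then induct using that each letter is a composition of $\mu_i$ with the cyclic relabelling $V$.
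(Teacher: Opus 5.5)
\textbf{Comparison with the paper.} Your route differs from the paper's. The paper never identifies the terminal triangle intrinsically. It asserts that $W(\pazocal{G}(z_q))$ is the dual of the cutting sequence of $z_q$, which is the same path-follows-the-geodesic correspondence as your first step. It then compares this word case by case with the directions of O'Sullivan's fractional algorithm and imports that algorithm's correctness: the zones of the split Farey triangle for complex roots, whether the left lake, the river or the right lake is met first when $\Delta=k^2$, and a single $\mu_2$ for negative roots. Recognising the terminal triangle directly from the values of $q$ would make the theorem self-contained, and the zones would become irrelevant, since they only matter for matching the fractional directions. As written, however, the case analysis has genuine gaps.

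\textbf{The river case.} For $0<\Delta\neq k^2$, your claim that the visited vertices are \emph{exactly} the river vertices is false. Note also that here $\overline{z_q}=z_q$, so you mean the conjugate root $z_q'$. Take $q=x^2-20xy+98y^2$, with initial cluster $(1,79,98)$ and $z_q=10+\sqrt2$. The path visits $\{n,n+1,\infty\}$ for $0\le n\le 7$, all of whose labels are positive, before it meets the river. What is true is that the geodesics $[-1,z_q)$ and $(z_q',z_q)$ are asymptotic at $z_q$, so from some point on they cross the same Farey edges. Hence the path joins the river after a finite prefix and follows only its half toward $z_q$; by periodicity this suffices.

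\textbf{The perfect-square case.} For $\Delta=k^2$ you never say \emph{why} the sequence halts; in your geometric model the path just continues to $z_q$.
\begin{itemize}
\item The halt comes from $x_i'=F_i/x_i$ being undefined at $x_i=0$, i.e.\ from an attempt to cross the edge opposite a vertex where $q$ vanishes. So once the path is inside the fan of triangles at a root, it can only run along that shore.
\item You must then locate the halt. Suppose the geodesic toward $z_q$ enters the fan at the other root $z_q'$. It must leave through the outer edge of the fan triangle whose outer interval contains $z_q$. Since $z_q$ lies in a unique interval between consecutive Farey neighbours of $z_q'$, this is the edge through which the river $(z_q',z_q)$ leaves, so the halt is exactly at the left mouth.
\item Otherwise the path ends at the top triangle of the fan at $z_q$. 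This is the right mouth exactly when $z_q'$ is not strictly between the Farey parents of $z_q$; in the remaining case the shore walk is needed.
\end{itemize}
Your phrase ``joined by the finite chain of triangles around that zero face'' conflates the lakeshore with the river.

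\textbf{The case $\Delta<0$.} Here the justification is garbled, since $\sqrt{|\Delta|}$ is constant, and it runs in the wrong order. The right fact is that, for $w>0$,
\[
q(r,s)=\tfrac{\sqrt{|\Delta|}}{2}\,\frac{|r-sz_q|^2}{\mathfrak{Im}(z_q)},
\]
which is $\tfrac{\sqrt{|\Delta|}}{2}$ times the exponential of the signed distance from $z_q$ to the Ford horocycle at $\frac{r}{s}$. The reflection in an edge of the terminal triangle $T$ lies in $\mathrm{PGL}_2(\integer)$. It swaps the horocycles at the two vertices opposite that edge, one in $T$ and one in its neighbour. So the edge points away from $T$ if and only if $z_q$ lies on the side of $T$. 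Hence $z_q\in T$ makes $T$ the well, and minimality of its three values then follows from the climbing lemma, not conversely.
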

\begin{proof}
    Assume first the root is positive. As a word, $\pazocal{G}(z_q)$ is none other than the dual of the root's \textbf{cutting sequence} \cite[§7]{Sullivan2026}, itself a word encoded by numbers $c_i\in\naturali$ with $c_1\geq0$: 
    \begin{equation*}
        W(\pazocal{G}(z_q))=(L^{c_1}R^{c_2}L^{c_3}\cdots [L\text{\,or\,}R]^{c_n})^*.
    \end{equation*}
    This can be seen in the usual local terms of a cluster $(w,n,e)$: indeed, $\mu_e$ goes west and $\mu_w$ goes east but, being a mutation orientation-reversing, at every other step prescribed by the sequence of mutations such cardinal mismatch is compensated.

    When the root is an irrational real, the cutting sequence matches the (infinite) directions of the fractional algorithm \cite[Theorem 7.3]{Sullivan2026}. Once on the river, each bend is detected by a sign-change in the mutated variable.
    
    For a rational root, by the same Theorem $z_q=[c_1,c_2,\ldots,c_n,1]$ so that, when $\Delta=0$, the cutting sequence equals the directions of the fractional algorithm minus the last turn. Since the reduced form $q_0=[w_0,\delta_0,e_0]$ lays on the lakeshore (as the edge $\delta_0$), the cluster algorithm terminates on the lake.  
    If the discriminant is a perfect square, the cluster framework manifests a numerical obstruction that turns lakes into ``black holes'': once a sequence of mutations hits any lake, it can only proceed along its shore (the ``event horizon'') in that any attempt to leave would trigger a division by zero. It follows that, when the fractional algorithm hits first
    \begin{itemize}\itemsep0em\vspace{-.25em}
        \item the left lake, the cluster algorithm halts at the left river mouth;
        \item the river, the cluster algorithm terminates at the right river mouth;
        \item the right lake, the cluster algorithm terminates when 0 (from the right lake) appears as a value. The nearest (right) river mouth is reached with the additional alternating sequence of mutations along the shore.
    \end{itemize}\vspace{-.25em}
    Such additional sequence canonically descends to the mouth by decreasing absolute values.
    
    When the root is genuinely complex, namely
    \begin{equation}
        z_q\overset{\text{\cite{Sullivan2026}}}{=\joinrel=}[a_1,a_2,\ldots,a_r+z_0]\in\mathbb{H},
    \end{equation}
    a tetrachotomy manifests: referring to the zones in \Cref{fig:split-triangle}, the cutting sequence equals
    \begin{itemize}\itemsep0em\vspace{-.25em}
        \item the directions of the fractional algorithm minus the last turn inside \texttt{3}, \texttt{5};
        \item all the directions of the fractional algorithm inside \texttt{2};
        \item $L^{a_1}R^{a_2}L^{a_3}\cdots [L\text{ or }R]^{a_r}$ inside \texttt{1};
        \item $L^{a_1}R^{a_2}L^{a_3}\cdots [L\text{ or }R]^{a_r-1}$ inside \texttt{4}, \texttt{6}.
    \end{itemize}\vspace{-.25em}
    In the first case, $\delta_0=m_3-m_1-m_2>0$ is indeed outward for the vertex well, requiring the last turn to be ignored.
    In the second case, $\delta_0=m_3-m_1-m_2<0$ is inward for the vertex well and the cluster algorithm needs even the last turn to reach the reduced vertex.
    In the last two cases, the directions need a final reversal (a letter $S$ in the original paper instead of our $E$) which the cluster algorithm is free from: in the third(fourth) case, $\delta_0$ is indeed outward(inward). 
    In particular, when $z_q$ belongs to the triangle having $1$ as Farey sum, $q$ is already part of a reduced vertex and no mutation is prescribed.
    
    Therefore, when the root has nonnegative real part, the cluster algorithm terminates precisely on a compatible reduced vertex from \Cref{fig:local}.
    
    Assume now a negative root. Then, the fractional algorithm is limited to left or right turns and needs a cumbersome backward motion $L^{-k}R,\ k\geq1$, to invert direction and align with the optimal reduction path.
    In cluster terms, such inversion manifests simply as a single index-preserving mutation, namely $\mu_n$ for the initial cluster $q=(w,n,e)$ corresponding to the misaligned edge $\delta=n-e-w$.
    The root of the resulting form $q \circ S=(w,s,e)$ is given by the reflection of $z_q$ fixing the disk's \emph{horizontal} axis:
    \begin{equation}
        z_{q\circ S}=-\mathfrak{Re}(z_q)+\mathtt{i}\mathfrak{Im}(z_q).
    \end{equation}
    In particular, $z_{q\circ S}$ is now positive (much as $q \circ S$ is now aligned as an edge of the topograph $\pazocal{T}_{q\circ S}$) and leads to a reduced vertex by the previous argument.
    The word it induces can be computed as we prescribed in the geometric constructing recipe of the snake graph: being reflected extends from the roots $z_q$ and $z_{q\circ S}$ to the corresponding snake graphs $\pazocal{G}(z_q)$ and $\pazocal{G}(z_{q\circ S})$, whose words $w$ in the restricted alphabet $\{L,R\}$ indeed agree.  
\end{proof}

\begin{remark}
    On the one hand, the numerical obstruction mentioned in the proof implies \emph{both} mouths need to be considered reduced in order to cluster-rule whether two forms with the same discriminant $k^2$ are equivalent. A mouth is recovered from the other along the river with Conway's rule---at least for the first step entering the river.
    Essentially, this shortcoming is equivalent to the inefficient need \cite[Page 27]{Sullivan2025} of a second run for the fractional algorithm to achieve a unique reduced representative.
    In the exceptional case of the so-called \textbf{weir}
    \begin{figure}[!ht]
        \centering
        \includegraphics[height=4.75em]{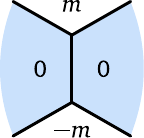}
    \end{figure}
    \\the upper(lower) vertex act as the left(right) mouth---for a river of length zero---and the two can be here mapped via the only possible mutation.
    
    On the other hand, notice the cardinality of a cycle doubles for $\mathrm{PGL}$: under flips of orientation, a simply \cite[Definition 6.14]{Sullivan2025} reduced form $q_0=[a_0,\delta_0,c_0]$ is reversed to $[c_0,-\delta_0,a_0]$.  
\end{remark}
\begin{remark}
    For an abstract snake graph $\pazocal{G}$, words $W(\pazocal{G})$ are made of either straights $LL\cdots L$, $RR\cdots R$, or zigzags $LRL\cdots[L\text{\,or\,}R]$, $RLR\cdots[L\text{\,or\,}R]$. In the topograph (fractional viewpoint), the former run along the boundary of a fixed $\infty$-gon while the latter keep jumping, after at most two edges, to adjacent $\infty$-gons. In the Farey tessellation (cluster viewpoint), these roles are swapped and the resulting phenomenology, which echoes the setting of associahedra in finite type, illuminates why duality is the one gateway to the language of mutations. On the one hand, straights turn into zigzags yielding alternating sequences $\mu_j\mu_k\mu_j\cdots[\mu_j\text{\,or\,}\mu_k]$ which border the $\infty$-gon labeled by the one fixed cluster value $x_i$.
    On the other hand, a zigzag rectifies into a straight yielding (mod 3) monotone sequences like $\mu_1\mu_2\mu_3\mu_1\cdots[\mu_i\text{\,or\,}\mu_j\text{\,or\,}\mu_k]$ or $\mu_1\mu_3\mu_2\mu_1\cdots[\mu_i\text{\,or\,}\mu_j\text{\,or\,}\mu_k]$. Monotonicity prevents a value from surviving more than three consecutive clusters: indeed, the triplet $\mu_i\mu_j\mu_i$ would have value $x_k$ in all its \emph{four} clusters (2 adjacent edges involve 3 vertices).
\end{remark}

For strict-equivalence, a canonical reduced cluster $q_0^\mu\in\integer^3$ must be singled out from any reduced vertex.
Denoting by the same $R$ the matrix defined in \cite[(2.4)]{Sullivan2026} and by $q_0$ or $\{q_{0,i}\}_{i\in I}$ the output of the fractional algorithm, we prescribe the following natural choices: 

\begin{itemize}\itemsep0em\vspace{-.5em}
        \item at the well, match $q_0$ (up to reversal $E$) by cyclically permuting $\mathrm{max}\{m_1,m_2,m_3\}$ to middle cluster value---additionally swapping by $U$ when the sequence of mutations is of odd length;
        \item at the lake, take the cluster $(m,0,m)=q_0 \circ R^{-1}$;
        \item at a mouth, take $(-,0,+)=q_0 \circ R^{-1}$ when left and $(+,0,-)$ when right;
        \item at a river bend detected by a mutation $\mu_j: z_j \mapsto z_j',\ \mathrm{sgn}(z_j')\neq\mathrm{sgn}(z_j)$, take $(-,z_j',+)=q_{0,k}$ for some $k\in I$.  
    \end{itemize}\vspace{-.5em}
In particular, a bend yields a single reduced cluster, halving the tuple's cardinality when $\Delta=k^2$. 

\begin{corollary}
    Two forms share the same tuple of reduced vertices if and only if they are equivalent. Moreover, they additionally share the tuple of reduced clusters if and only if they are strict-equivalent.
\end{corollary}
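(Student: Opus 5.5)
The plan is to treat the two claims separately and reduce each to Theorem~\ref{thm:reducLaurent} together with the $\mathrm{PGL}_2(\integer)$- and $\mathrm{PSL}_2(\integer)$-actions of \S\ref{sec:equiv}.

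\emph{Reduced vertices and equivalence.} For the forward implication, suppose $q_1\sim q_2$, so $q_2=q_1\circ M$ with $M\in\mathrm{PGL}_2(\integer)$. Then $M$ is a word in $S,T,U$, and these act on clusters as $\mu_2$, $(23)$, $(13)$. Hence the cluster of $q_2$ is obtained from that of $q_1$ by cluster transformations, and both clusters lie on the same topograph $\pazocal{T}_{[q_1]}=\pazocal{T}_{[q_2]}$. This uses only that the face labels are intrinsic to the class. The tuple of reduced vertices is defined topographically: the well, the lake, the ordered pair of mouths (left first, read with positive labels on top), or the cyclically ordered bends along the river. It therefore depends only on the labelled topograph, and by Theorem~\ref{thm:reducLaurent} the cluster algorithm started from either form lands on it. For the converse, suppose $q_1$ and $q_2$ share a reduced vertex $v$. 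Each form is joined to $v$ by a finite sequence of mutations (Theorem~\ref{thm:reducLaurent}). In the irrational-river case we instead truncate once a bend is reached, and for $\Delta=k^2$ we use the canonical alternating tail to the right mouth. At $v$ the two clusters are unordered triples with equal underlying set, so they differ by an element of $\mathfrak{S}_3=\langle(23),(13)\rangle$. Composing the mutation sequence of $q_1$, this permutation, and the inverse sequence of $q_2$ gives a word in $\mu_2,(23),(13)$ together with $\mu_1=VSV^2$ and $\mu_3=V^2SV$. Each letter is the image of a matrix, so the composite is realized by some $M\in\mathrm{PGL}_2(\integer)$ with $q_2=q_1\circ M$. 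One must check that sharing the tuple means sharing an actual vertex of one common topograph, and not merely equal labels. I would handle this by noting that the discriminant is read off any vertex through \eqref{disc}, and the labels then rebuild the whole topograph through \eqref{q-mutation}. So equal labelled vertices force equal topographs, and in the bend case equal bends up to cyclic shift.

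\emph{Reduced clusters and strict equivalence.} The canonical choices of $q_0^\mu$ listed before the statement are designed to agree with O'Sullivan's reduced outputs. At the well, $q_0^\mu$ matches $q_0$ up to $E$, and the $U$ correction compensates odd-length sequences, since each mutation reverses orientation. At the lake and mouths, $q_0^\mu=q_0\circ R^{-1}$. At bends, $q_0^\mu=q_{0,k}$. I would then track determinants. Every mutation and every transposition contributes $\det=-1$, so a path of cluster transformations of total length $\ell$ realizes a matrix of determinant $(-1)^\ell$. The parity correction built into the canonical choices makes the map $q\mapsto q_0^\mu$ compatible with the strict class. Concretely, $q$ is strictly equivalent to $q_0^\mu$ via an element of $\mathrm{PSL}_2(\integer)$ generated by $E=(13)\mu_2$ and $V=(123)$. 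The strict statement then follows from the corresponding uniqueness theorem for the fractional algorithm in \cite{Sullivan2025}: two forms are strictly equivalent iff their reduced outputs coincide, as a single form or up to cycle. That theorem is transported through the dictionary $q_0\leftrightarrow q_0^\mu$.

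\emph{Main obstacle.} I expect the hardest step to be the parity and orientation bookkeeping in the second claim, especially at the well when $\{m_1,m_2,m_3\}$ has repeated values. There the lift of $\mathfrak{S}_3$ can identify a form with its reverse, and I would treat ambiguous forms separately. The perfect-square case also needs care, because the algorithm may halt at the left mouth. I would resolve it using the remark that both mouths are declared reduced and are joined by Conway's rule, so the pair $(\{-,0,+\},\{+,0,-\})$ is still a class invariant.
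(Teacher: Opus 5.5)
Your proposal is correct and follows essentially the paper's own argument in more detail. The paper also gets the first equivalence from the reduced vertices being uniquely determined on the class topograph, so that sharing them means lying on one topograph up to orientation. It gets the second from the canonical reduced clusters being strictly equivalent to the starting form. Your composed path in $\mathrm{PGL}_2(\integer)$, your determinant-parity count behind the $U$-correction, and your explicit appeal to O'Sullivan's uniqueness of reduced forms just make precise what the paper's ``uniquely determined'' and ``canonical'' leave tacit.

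One convention, implicit in the paper as well, should be stated: left/right mouths and bend vertices must be read on a single fixed drawing of $\pazocal{T}_{[q]}$, because a determinant $-1$ equivalence mirrors the picture. For example, the algorithm run from $(18,2,7)$ halts at $\{0,2,-3\}$ as the left mouth, while run from $(7,2,18)$ it reaches that same vertex as the right mouth.
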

\begin{proof}
    The tuple of reduced vertices is uniquely determined, as is the sequence of mutations being minimal with respect to length. Therefore, two forms sharing the tuple of reduced vertices must appear on the same topograph \emph{up to orientation}. Being canonical, the tuple of reduced clusters, whose entries are now strict-equivalent to the initial form, is uniquely determined as well. Thus, two forms sharing the tuple of reduced clusters must appear on the \emph{very} same topograph.
\end{proof}

As a consequence of \Cref{thm:reducLaurent}, the cluster algorithm endows reduction with Laurentness, proving the remaining part B of \Cref{thm:intro}:
\begin{corollary}
    The procedure of reduction of binary integral quadratic forms, for any discriminant $\Delta\in\integer\neq k^2$, exhibits the Laurent phenomenon: starting from a reduced form $ax^2+\delta xy+cy^2$ corresponding to the solution $\mathbf{z}^0=(z_1^0,z_2^0,z_3^0)=(a,a+\delta+c,c)\in(\integer^*)^3$ of the discriminant equation, any other equivalent form will correspond to a solution whose entries express as Laurent polynomials of the initial $\mathbf{z}^0$.
\end{corollary}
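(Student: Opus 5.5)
The statement concerns the Corollary on Laurentness of reduction, for $\Delta\neq k^2$. The proof has two ingredients.

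The first is that every form in $[q]$ is reached from the reduced cluster $\mathbf{z}^0$ by cluster transformations. The second is that these transformations, read as LP mutations in the specialization \eqref{Con-spec} of the master setting, produce Laurent polynomials in $\mathbf{z}^0$.

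For the first ingredient, I take an arbitrary form $q'\sim q$. By \Cref{thm:reducLaurent}, the cluster algorithm attached to the root $z_{q'}$ produces a finite sequence of mutations $\mu_{i_1}\cdots\mu_{i_m}$ leading from $q'$ to a reduced vertex. This vertex is the well when $\Delta<0$, and a river bend when $0<\Delta\neq k^2$. The case $\Delta=0$ is a perfect square and is excluded. For positive nonsquare $\Delta$ the algorithm runs forever along the river, so I stop at the first bend that is reached; this bend is reached after finitely many steps. By the preceding Corollary, equivalent forms share the tuple of reduced vertices, so this vertex is one of those attached to $\mathbf{z}^0$. Only finitely many such vertices exist: one well, or a finite cycle of bends. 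They are linked to $\mathbf{z}^0$ by finitely many further mutations, either along the river or by permuting at the well, together with permutations $(ij)$ that realize $U$ and $T$.

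Inverting, using that mutations are involutions and permutations are invertible, $q'$ is obtained from $\mathbf{z}^0$ by a word in $\mu_1,\mu_2,\mu_3$ and $\mathfrak{S}_3$. As an ordered cluster, $q'$ is this word applied to $\mathbf{z}^0$.

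For the second ingredient, I invoke the Laurentness established in §\ref{sec:master}. The triple $F_i=x_j^2+x_k^2-2x_jx_k-\Delta$ is preserved by every mutation. This was the constructive computation with absorbing monomial $M=x_i'^2$, which holds for any specialization of the parameters. So the seed pattern is uniform on the tree $\mathbb{T}_3$, and \cite[Theorem 5.1]{LP2016} gives that every cluster variable is a Laurent polynomial in $\mathbf{z}^0$. The $F_i$ here need not be irreducible, since $F_i=(x_j-x_k)^2-\Delta$ and this factors when $\Delta$ is a square. That is where I would point to the hypothesis $\Delta\neq k^2$.

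Permutations cause no difficulty. The exchange polynomials are $\mathfrak{S}_3$-symmetric because all $\delta_i=-2$ and all $\sigma_i=0$, so permutations commute with the mutation pattern up to relabelling and merely reorder Laurent polynomials.

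Finally, $\mathbf{z}^0\in(\integer^*)^3$ is needed so that evaluating these Laurent polynomials is legitimate. For a reduced vertex with $\Delta\neq k^2$ no entry vanishes, since a value $0$ would mean $q$ represents zero, forcing a square discriminant. The main obstacle I expect is making the first ingredient rigorous, in particular that the finitely many reduced vertices are mutually reachable in the way claimed. For the river I would cite periodicity of the river in \Cref{thm:reducLaurent}; for the well, the lift of $\mathfrak{S}_3$ from Example~\ref{ex:genericity}.
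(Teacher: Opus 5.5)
Your proposal is correct and follows the paper's route: the paper derives this corollary in one line from Theorem~\ref{thm:reducLaurent}, which realizes reduction as a sequence of mutations (read backwards from $\mathbf{z}^0$), combined with the LP Laurentness of \S\ref{sec:master}, valid for any specialization of the master parameters. Your additional checks make explicit what the paper leaves implicit: irreducibility of $F_i=(x_j-x_k)^2-\Delta$ for non-square $\Delta$, non-vanishing of all face labels so that evaluation at $\mathbf{z}^0\in(\integer^*)^3$ is legitimate, and moving permutations past mutations via the $\mathfrak{S}_3$-symmetry of the Conway specialization (the reachability step could also be shortened by using the realization $S\mapsto\mu_2$, $T\mapsto(23)$, $U\mapsto(13)$ of $\mathrm{PGL}_2(\integer)$ directly).
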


\subsubsection{Rattles and rationals}

We conclude the paper focusing on the combinatorics of perfect matchings in snake graphs, starting with an abstraction of the full information captured by the geometry-driven $\pazocal{G}(z)$:
\begin{definition}\label{def:rattlesnake}
    A \textbf{rattle} is an edge $e=\{u,v\}$ in a snake graph $\pazocal{G}$ such that $\mathrm{deg}(u)=\mathrm{deg}(v)\leq2$. The pair $(\pazocal{G},e)$ gives a \textbf{rattlesnake}. 
\end{definition}
In the following, rattles acquire cardinal adjectives as in \Cref{fig:diamond}.
Let $\pazocal{G}_n=(G_0,\ldots,G_{n-1})$ denote an abstract snake graph consisting of $n$ tiles---that is, whose diamond tiles are glued (northward) either westward or eastward starting from the initial $G_0$. For $\pazocal{G}_0$, namely a diamond's single edge, the rattle is obviously unique and either western or eastern. With the exception of all four edges being rattles in $\pazocal{G}_1$, there are just two rattles otherwise: exactly one southern and one northern.
Such pair of rattles is well-defined even in a single tile, provided an identification based on the following abstract analogue of \eqref{W(G)}:
\begin{equation}
    W(\pazocal{G},e)=\begin{cases}
        w(e), \hfill \text{$e$ southern,}\\
        Sw(e), \qquad \text{$e$ northern,}
    \end{cases}
\end{equation}
where $w(e)$ is a word in alphabet $\{L,R\}$ that, starting from the tile hosting the rattle $e$, prescribes the cardinal gluing instructions of the snake graph $\pazocal{G}$.
\begin{definition}
    Two rattlesnakes $(\pazocal{G}_1,e_1)$ and $(\pazocal{G}_2,e_2)$ are said to be \textbf{equivalent} if both $\pazocal{G}_1=\pazocal{G}_2$ and $W(\pazocal{G}_1,e_1)=W(\pazocal{G}_2,e_2)$.
    In shorthand notation, $(\pazocal{G}_1,e_1)\sim(\pazocal{G}_2,e_2) \iff [\pazocal{G}_1,e_1]=[\pazocal{G}_2,e_2]$.
\end{definition}

In fact, the snake graph $\pazocal{G}(z)$ from \Cref{sec:clustred} disguises the abstract rattlesnake $[\pazocal{G}(z),e(z)]$ singled out by the unique rattle 
\begin{equation}
    e(z) \begin{cases}
        \text{southern}, \qquad \chi(z)=0,\\
        \text{northern}, \hfill \chi(z)=1.
    \end{cases}
\end{equation}
In particular, the word attached to $G(z)$ via geometry is recovered in purely graph-theoretic terms as $W[\pazocal{G}(z),e(z)]$.
As anticipated in the Introduction, $\pazocal{G}(z)$ knows in full the unique rational number that, meant as a Farey sum, singles out precisely the one Farey triangle $z$ belongs to. Defining a \textbf{perfect matching} in $[\pazocal{G},e]$ as a perfect matching in $\pazocal{G}$ that includes the rattle, we indeed get

\begin{theorem}
    Let $z\in\overline{\mathbb{H}}$ induce a geodesic that terminates in $\pm\frac{r}{s}\in\rational\cup\{\infty\}$. Then, the number of perfect matchings in $[\pazocal{G}(z),e(z)]$ equals $\mathrm{max}\{r,s\}$, namely
    \begin{equation}
        \mathrm{Match}[\pazocal{G}(z),e(z)]=\begin{cases}
            r, \quad \mathrm{if} \hspace{.5em} |\frac{r}{s}|\geq1,\\
            s, \quad \mathrm{if} \hspace{.5em} |\frac{r}{s}|\leq1.
        \end{cases}
    \end{equation}
    In particular,
    \begin{equation}
        \mathrm{Match}(\pazocal{G}(z))=r+s.
    \end{equation}
\end{theorem}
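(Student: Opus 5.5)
We will reduce everything to the classical bijection of \cite[\S4]{CS2018} between positive continued fractions and snake graphs. There, $\mathrm{Match}\,\pazocal{G}[a_1,\ldots,a_n]$ is the numerator of $[a_1,\ldots,a_n]$, and the sign-alternating gluing rule of \cite[\S4]{MSW2011} is exactly what our unfolding of the Farey polygon produces.

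\textbf{Reduction to the positive case.} First assume $\mathfrak{Re}(z)\geq 0$, so $\chi(z)=0$ and the rattle is southern. The geodesic starts at $-1$ (or at $+1$, after the symmetric convention), crosses $\tau_0=\mathtt{i}\real_{>0}$ and then the Farey edges $\tau_1,\ldots,\tau_{n-1}$ before terminating at $\frac{r}{s}$. By construction, the triangulated polygon it singles out is the standard Farey snake polygon of \cite{CS2018}, augmented by the initial triangle on the negative side of $\tau_0$.

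\textbf{Matchings of the unmarked snake.} Writing $\frac{r}{s}=[a_1,\ldots,a_m]$, I expect $\pazocal{G}(z)$ to coincide up to tilting with $\pazocal{G}[b_1,\ldots,b_k]$, where $[b_1,\ldots,b_k]$ is the continued fraction of $\frac{r+s}{s}$ or of $\frac{r+s}{r}$. The extra tile $G_0$ accounts for the Farey step $\frac{r}{s}\mapsto\frac{r+s}{s}$ coming from the crossing of $\tau_0$. Invoking \cite[Theorem 4.1]{CS2018} then gives $\mathrm{Match}(\pazocal{G}(z))=r+s$, which is the final claim. To make this identification rigorous I would proceed by induction on the number of tiles. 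Flipping the last triangle corresponds to adding $\frac{r}{s}\mapsto\frac{r+t}{s+u}$. On the graph side, the standard recursion says that adding a tile glued along the zigzag or straight direction adds the matching counts of the two previous truncations. The two sides therefore satisfy the same Farey recursion, and they agree on $\pazocal{G}_0$ and $\pazocal{G}_1$, which have $1$ and $2$ matchings for $\infty\leftrightarrow\frac10$ and for $\pm1$.

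\textbf{Matchings containing the rattle.} A perfect matching of $\pazocal{G}(z)$ either contains the southern rattle $e$ or it does not. If it contains $e$, the two endpoints of $e$ are removed. Since both endpoints have degree $\leq 2$, the remaining edges of $G_0$ are forced, and the matching restricts to a perfect matching of the snake graph with $G_0$ and the forced tile deleted. That smaller graph is the snake of the polygon without the initial triangle, whose count by the same recursion is the larger of $r,s$. The complementary matchings, those avoiding $e$, are counted by the smaller one, which is consistent with the total $r+s$. Which of $r,s$ is larger is decided by whether the first gluing letter after $G_0$ is $L$ or $R$, that is, by whether the geodesic exits through $\frac11$ into $|\frac rs|\geq1$ or into $|\frac rs|\leq1$. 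I would verify this by a case check on the first gluing.

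\textbf{Negative roots and degenerate cases.} For $\chi(z)=1$, use the reflection $z\mapsto-\overline{z}$ from the proof of \Cref{thm:reducLaurent}. It maps the rattlesnake to one with the same $\{L,R\}$-word and sends the northern rattle to the southern rattle of the reflected snake. Since it also sends $-\frac rs$ to $\frac rs$, both counts are unchanged. The degenerate roots $0$ and $\infty$ give $\pazocal{G}_0$, a single edge, with one matching, consistent with $\max\{1,0\}=1=r+s$.

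\textbf{Main obstacle.} The hard step is the rattle count: showing that fixing $e$ truncates exactly one tile's worth of Farey history and picks out $\max\{r,s\}$ rather than $\min\{r,s\}$. This rests on the convention for which of the edges adjacent to $\tau_0$ the geodesic enters first, so I would first test small examples such as $\nicefrac{-1}{3}$ from \Cref{fig:unfolding} to pin the orientation down before writing the induction.
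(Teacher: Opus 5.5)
You have the same architecture as the paper: reduce to a positive endpoint, identify $\pazocal{G}(z)$ with a continued-fraction snake graph of \cite{CS2018}, observe that forcing the rattle amounts to deleting $G_0$, and reflect by $z\mapsto-\bar z$ for negative roots. However, the step that carries all the content, tying matching counts to Farey data, is not established, and the induction you propose for it rests on a false recursion. Write $\pazocal{G}_{\leq k}$ for the truncation $G_0,\ldots,G_k$. If $G_{k+1}$ is glued along $\{u,v\}$, then $\mathrm{Match}(\pazocal{G}_{\leq k+1})=\mathrm{Match}(\pazocal{G}_{\leq k})+\mathrm{Match}(\pazocal{G}_{\leq k}\setminus\{u,v\})$. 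The last term equals $\mathrm{Match}(\pazocal{G}_{\leq k-1})$ only when $G_{k-1},G_k,G_{k+1}$ are aligned. At a turn, deleting $u,v$ forces an edge of $G_k$, and the deletion propagates back to the last straight piece. The paper's own zigzag $LRL$ for $\nicefrac{1}{4}$ shows this: its truncation counts are $2,3,4,5$, not $2,3,5,8$. Dually, the Farey vertices $\frac11,\frac12,\frac13,\frac14$ arise from $\frac10$ by repeatedly adding $\frac01$, the common endpoint of the fan $\tau_0,\ldots,\tau_3$, not the previous vertex. So your two recursions match only once you prove the missing lemma. The alternating-orientation gluing makes $G_k$ a turn exactly when $\tau_{k-1},\tau_k,\tau_{k+1}$ share an endpoint (the two glue edges are then adjacent in the quadrilateral around $\tau_k$). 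It makes $G_k$ a straight piece exactly when they do not (the glue edges are then opposite). The paper sidesteps this by comparing words. It matches $\pazocal{G}(\nicefrac{r}{s})\setminus G_0$ with $\pazocal{G}[a_1,\ldots,a_n,1]$ through the word description of \cite{Ovenhouse2023}, so \cite[Theorem 3.4]{CS2018} gives $r$ when $r\geq s$. Opposite words handle $\nicefrac{s}{r}$, and $\pazocal{G}(\nicefrac{r}{s})\cong\pazocal{G}(\frac{r+s}{r})\setminus G_0$ yields the total $r+s$.

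The same gap leaves your rattle step open, as you acknowledge. First, a correction: containing $e$ only excludes the two other edges of $G_0$ at its endpoints. The fourth edge of $G_0$ is shared with $G_1$, so these matchings are exactly the perfect matchings of $G_1\cup\cdots\cup G_{n-1}$, and no further tile is removed. The quickest way to close both steps is the expansion formula of \cite{MSW2011} at unit specialization. The snake graph of the diagonal joining $\frac ab$ and $\frac cd$ in a Farey polygon has $|ad-bc|$ perfect matchings, namely its $\lambda$-length when every Farey edge has length $1$. This gives $r+s$ for the diagonal from $-1$ to $\frac rs$. The tiles $G_1,\ldots,G_{n-1}$ and their gluings only see the triangles after $(-1,0,\infty)$. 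Hence $\pazocal{G}(z)\setminus G_0$ is the snake graph of the diagonal starting at the vertex of $(0,1,\infty)$ opposite $\tau_1$. That vertex is $0$ when the geodesic exits through $(1,\infty)$, i.e.\ $\frac rs>1$, giving $r$. It is $\infty$ when the geodesic exits through $(0,1)$, giving $s$. In both cases this is $\max\{r,s\}$, which settles the max-versus-min question without testing examples.
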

\begin{proof}
    It suffices to assume $z=\pm\frac{r}{s}\in\rational_{\geq1}$, namely $r\geq s$.    
    
    When $z=+\frac{r}{s}=[a_1,\ldots,a_n,1]$ is positive,
    \begin{equation}
        \mathrm{Match}[\G{\nicefrac{r}{s}},\e{\nicefrac{r}{s}}]=r.
    \end{equation}
    Indeed, forcing the rattle in matchings is equivalent to ignoring the first tile in $\pazocal{G}(z)$; in turn, $\pazocal{G}(z)\setminus G_0$ recovers Çanakçı-Schiffler's snake graph $\pazocal{G}[a_1,\ldots,a_n,1]$ for positive continued fractions, whose count of perfect matchings is known to yield the numerator \cite[Theorem 3.4]{CS2018}: choosing $e_0$ as the west side,
    \begin{equation*}
        W(\pazocal{G}[a_1,\ldots,a_n,1])\overset{\text{\cite{Ovenhouse2023}}}{=\joinrel=}(R^{a_1-1}L^{a_2}\cdots[L\mathrm{\,or\,}R]^{a_n})^*=(L^{a_1-1}R^{a_2}\cdots[R\mathrm{\,or\,}L]^{a_n})_*=W(\G{\nicefrac{r}{s}}\setminus G_0).
    \end{equation*}
    Since $\frac{s}{r}=[0,a_1,\ldots,a_n,1]$ and $L^0R^{a_1}L^{a_2}\cdots [L\text{\,or\,}R]^{a_n}=\overline{L^{a_1}\cdots [R\text{\,or\,}L]^{a_n}}$,
    \begin{equation}\label{opp-G}
       W(\G{\nicefrac{s}{r}})=(L^0R^{a_1}\cdots [L\text{\,or\,}R]^{a_n})^*=(L^{a_1}\cdots [R\text{\,or\,}L]^{a_n})_*=\overline{W}(\G{\nicefrac{r}{s}}),
    \end{equation}
    so that $W(\G{\nicefrac{s}{r}}\setminus G_0)=\overline{W}(\G{\nicefrac{r}{s}}\setminus G_0)$.
    Being the perfect matchings of opposite snake graphs in obvious bijection, $\mathrm{Match}[\G{\nicefrac{s}{r}},\e{\nicefrac{s}{r}}]=r$. 
    In topographic terms, \eqref{opp-G} is a consequence of the Farey tessellation's reflection symmetry fixing the \emph{vertical} axis.    
    
    For the whole snake graph, $\frac{r+s}{s}=[1+a_1,\ldots,a_n,1]$ yields
    \begin{equation}
        W(\G{\nicefrac{r+s}{s}}\setminus G_0)=(R^{a_1}L^{a_2}R^{a_1}\cdots [L\text{\,or\,}R]^{a_n})^*=\overline{W}(\G{\nicefrac{r}{s}})=W(\G{\nicefrac{s}{r}}).
    \end{equation}
    Analogously, $\frac{r+s}{r}=[1,a_1,\ldots,a_n,1]$ yields
    \begin{equation}
        W(\G{\nicefrac{r+s}{r}}\setminus G_0)=(L^{a_1}R^{a_2}\cdots [L\text{\,or\,}R]^{a_n})^*=W(\G{\nicefrac{r}{s}})=\overline{W}(\G{\nicefrac{s}{r}}).
    \end{equation}
    In particular, it follows that
    \begin{equation}
        \mathrm{Match}(\G{\nicefrac{r}{s}})=\mathrm{Match}(\G{\nicefrac{s}{r}})=r+s.
    \end{equation}
    
    When $z<0$, it suffices to notice that
    \begin{equation}\label{vertword}
        W(\pazocal{G}(z))=SW(\pazocal{G}(-z))
    \end{equation}
    together with the letter $S$ being neutral to matchings. In topographic terms, \eqref{vertword} is a consequence of the Farey tessellation's reflection symmetry fixing the \emph{horizontal} axis.
\end{proof}

In practice, the rattle codifies the fraction's sign and magnitude at once: on the one hand, it is southern when the fraction is positive or northern when negative; on the other hand, it is westward when the fraction is greater than $1$ or eastward otherwise.

Thus, a noteworthy final consequence follows: 
\begin{corollary}
    Let $z\in\rational\cup\{\infty\}$. Then, 
    \begin{equation*}
        |z|=
        \begin{cases}
            \dfrac{\mathrm{Match}[G(z),e(z)]}{\mathrm{Match}(G(z))-\mathrm{Match}[G(z),e(z)]}, \qquad \mathrm{if} \hspace{.5em} |z|\geq1,\\[2em]
            \dfrac{\mathrm{Match}(G(z))-\mathrm{Match}[G(z),e(z)]}{\mathrm{Match}[G(z),e(z)]}, \qquad \mathrm{if} \hspace{.5em} |z|\leq1,
        \end{cases}
    \end{equation*}
    and the sign of $z$ is captured by $e(z)$: positive when the rattle is southern and negative otherwise. 
    In particular, the set of rationals together with $\infty$ is in bijection with isoclasses of rattlesnakes:
    \begin{equation}
    \begin{matrix}
            \epsilon\!\left(\frac{\mathrm{Match}[G,e]}{\mathrm{Match}(G)-\mathrm{Match}[G,e]}\right)^{\varepsilon} & \mathrel{\reflectbox{\ensuremath{\mapsto}}} & [G,e] \\[.5em]
            \rotatebox[origin=c]{90}{$\ni$} & & \rotatebox[origin=c]{90}{$\ni$} \\[.5em]
        \hspace{.55em}\rational\cup\infty & \overset{1:1}{\longleftrightarrow} & \hspace{1em} \{\mathrm{rattlesnakes}\}/\!\sim\\[.5em]
            \rotatebox[origin=c]{90}{$\in$} & & \rotatebox[origin=c]{90}{$\in$}\\[.5em]
        z & \mapsto & [\pazocal{G}(z),e(z)]
    \end{matrix}
    \end{equation}
    where
    \begin{equation}
        \epsilon=\begin{cases}
            -1, \quad \text{e $\mathrm{northern}$},\\
            +1, \quad \text{e $\mathrm{southern}$},
        \end{cases}
        \qquad
        \varepsilon=\begin{cases}
            -1, \quad \text{e $\mathrm{eastern}$}, \\
            +1, \quad \text{e $\mathrm{western}$}.
        \end{cases}
    \end{equation}
\end{corollary}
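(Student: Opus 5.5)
The corollary should follow from the preceding theorem, read backwards, together with a check that the rattlesnake class determines its rational. I plan three steps.

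\emph{Step 1: the formula for $|z|$.} Write $z=\pm\frac{r}{s}$ in lowest terms, with $z=\infty$ read as $\frac{1}{0}$. The preceding theorem gives $\mathrm{Match}(\pazocal{G}(z))=r+s$, and $\mathrm{Match}[\pazocal{G}(z),e(z)]=r$ when $|z|\geq1$, $=s$ when $|z|\leq1$.
\begin{itemize}
\item If $|z|\geq1$, the quotient $\mathrm{Match}[\,\cdot\,]/(\mathrm{Match}(\cdot)-\mathrm{Match}[\,\cdot\,])$ equals $r/s=|z|$.
\item If $|z|\leq1$, the reciprocal quotient equals $r/s$ again.
\item The boundary case $|z|=1$ is consistent, since then $r=s$.
\item The degenerate cases $z\in\{0,\infty\}$ have a single-edge snake, so $\mathrm{Match}=1$ and the rattle count is $1$ or $0$. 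I will check these directly against the convention $\frac10$ or $\frac01$.
\end{itemize}

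\emph{Step 2: sign and orientation of the rattle.} By definition $e(z)$ is southern exactly when $\chi(z)=0$, i.e.\ when $z\geq0$. This gives $\epsilon=\mathrm{sgn}(z)$, with $z=0$ counted as positive. For the exponent I must show that the rattle is western exactly when $|z|\geq1$. My first attempt is to read this off the construction: the first gluing letter of $W$, or the degenerate tilted edge for $0$ and $\infty$, records whether the geodesic leaves $\tau_0$ toward $\frac{1}{0}$ or toward $\frac{0}{1}$. By the reflection symmetries \eqref{opp-G} and \eqref{vertword} it is enough to treat $z\geq1$. In that case I will show that the rattle lies on the west side of $G_0$, using the choice of $e_0$ as the west side in the preceding proof. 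Granting this, $\varepsilon=+1$ on the branch $|z|\geq1$, and the displayed inverse map $[G,e]\mapsto\epsilon(\cdots)^{\varepsilon}$ returns $z$.

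\emph{Step 3: bijectivity.} Injectivity of $z\mapsto[\pazocal{G}(z),e(z)]$ follows from Step 2, because the inverse formula depends only on the isoclass; equivalent rattlesnakes have equal graphs and equal words, hence equal matching counts and equal rattle data.

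Surjectivity is the step I expect to be the main obstacle. Given an abstract rattlesnake $(\pazocal{G},e)$, I must produce $z$ with $[\pazocal{G}(z),e(z)]=[\pazocal{G},e]$. My plan is as follows.
\begin{enumerate}
\item Read the word $W(\pazocal{G},e)$. A leading $S$ fixes the sign, and the remaining $\{L,R\}$-word $w$ is arbitrary.
\item Undo the dual operation $(\cdot)^*$; this is possible because dualization is an involution on words.
\item Recover the run lengths $c_1,\dots,c_n$ of a cutting sequence, and hence the continued fraction $[c_1,\ldots,c_n,1]$, via the bijection of \cite{CS2018} and \cite[Theorem 7.3]{Sullivan2026} already used in \Cref{thm:reducLaurent}.
\end{enumerate}
Since every finite $\{L,R\}$-word arises as such a dual, every rattlesnake is realized. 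The care needed is entirely in the small cases: $\pazocal{G}_0$, and $\pazocal{G}_1$ where all four edges are rattles. There I will list the equivalence classes by hand and match them to $z\in\{0,\infty,\pm1,\pm\tfrac12,\pm2\}$.
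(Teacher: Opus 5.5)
Your proposal is correct and follows the paper's own, largely implicit, argument. The magnitude formula comes straight from the preceding theorem ($\mathrm{Match}[\pazocal{G}(z),e(z)]=\max\{r,s\}$, $\mathrm{Match}(\pazocal{G}(z))=r+s$), and the sign and exponent come from the rattle's north/south and west/east position; your surjectivity step (undoing the dual and recovering the cutting sequence) makes explicit what the paper leaves to ``in particular''.

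Two small fixes are needed in the degenerate and small cases:

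\begin{itemize}
\item For $z\in\{0,\infty\}$ the rattle count is $1$ in both cases, not ``$1$ or $0$'': the lone edge is the rattle and lies in the unique perfect matching, as $\max\{r,s\}=1$ predicts. A count of $0$ would break the $|z|\leq1$ branch at $z=0$.
\item The single tile accounts only for $z=\pm1$. Its four rattles collapse to two classes (word empty or $S$), so $\varepsilon$ is not determined on them, contrary to your ``equal rattle data'' claim; this is harmless since $|z|=1$. The values $\pm\tfrac12,\pm2$ come from two-tile snakes, which your general argument already covers.
\end{itemize}
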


\begin{example}[$\Delta=-31<0$]\label{ex:negative}
    For the cluster $\hat{q}=(16,200,103)$, the root $z_{\hat{q}}=\frac{-81+\mathtt{i}\sqrt{31}}{32}$ is a genuine complex number located inside the triangle singled out by the Farey sum $(\nicefrac{-3}{1})\oplus(\nicefrac{-2}{1})$, see \Cref{fig:plot}.    
    The induced geodesic, connecting the pair of Farey fractions $-1$ and $\nicefrac{-5}{2}$, yields the rattlesnake displayed in \Cref{fig:eghat} whose word reads as $SRLL$.    
    Keeping track of the index at each mutation step, we thus get the following sequence of clusters\vspace{-.75em}
    \begin{equation*}
        \begin{matrix}
            _{i\,=\,2} & & _{i\,=\,2} & & _{i\,=\,3} & & _{i\,=\,2} & & _{i\,=\,1} \\[.25em] \hline \\[-1em]
            (16,200,103) & \overset{\mu_2}{\longmapsto} & (16,38,103) & \overset{\mu_3}{\underset{i+1}{\longmapsto}} & (16,38,5) & \overset{\mu_2}{\underset{i-1}{\longmapsto}} & (16,4,5) & \overset{\mu_1}{\underset{i-1}{\longmapsto}} & (2,4,5)
        \end{matrix}
    \end{equation*}
    culminating in the vertex $\{2,4,5\}$. This is indeed reduced in that it matches the vertex well for the topograph $\pazocal{T}_{\hat{q}}$ of $\hat{q}=16x^2+81xy+103y^2$ \cite[Figure 9]{Sullivan2025}.    
    Passing to strict-equivalence, the fractional algorithm yields the reduced form $\hat{q}_0=2x^2+xy+4y^2$ that matches the reduced cluster after reversal: $\hat{q}_0^\mu=(4,5,2)=\hat{q}_0 \circ E=E[2,1,4]=[4,-1,2]$.
    En passant, we confirm the rattle $e(z_{\hat{q}})$ is northwestern, that
    \begin{equation*}
        \mathrm{Match}[\pazocal{G}(z_{\hat{q}}),e(z_{\hat{q}})]=\mathrm{Match}(\square\!\square\!\square)=5,
    \end{equation*}
    and that $\mathrm{Match}(\pazocal{G}(z_{\hat{q}}))=7=5+2$ all as expected.
    
    Let us now take $\check{q}=(41,20,4)$ and verify that $\check{q}\sim\hat{q}$. This time, $z_{\check{q}}=\frac{25+\mathtt{i}\sqrt{31}}{82}$ is positive and the geodesic terminates in the Farey sum $\frac{1}{3}\oplus\frac{0}{1}=\frac{1}{4}$, see again \Cref{fig:plot}.
    \begin{figure}[t]
        \centering
        \includegraphics[width=0.825\linewidth]{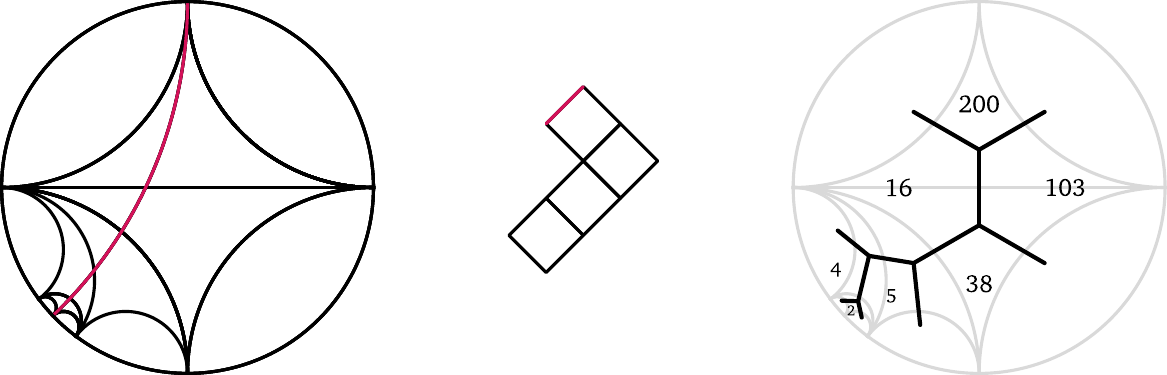}
        \caption{From left to right: geodesic, rattlesnake, and topograph for {\color{2red}$z_{\hat{q}}$}.}
        \label{fig:eghat}
    \end{figure}
    \begin{figure}[t]
        \centering
        \includegraphics[width=0.825\linewidth]{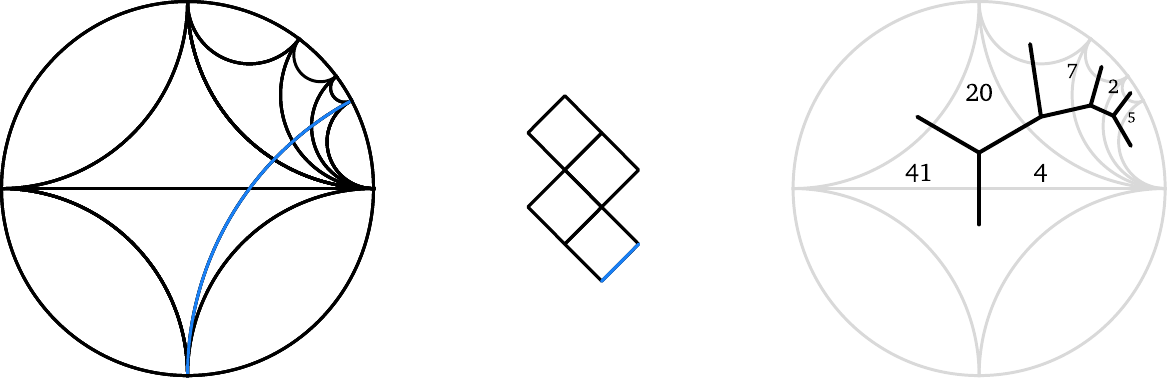}
        \caption{From left to right: geodesic, rattlesnake, and topograph for {\color{2blue}$z_{\check{q}}$}.}
        \label{fig:egcheck}
    \end{figure}
    The snake graph reads now as the zigzag $LRL$ in \Cref{fig:egcheck}, whose count of perfect matchings is $5$; forcing the (southeastern) rattle in, the number of perfect matchings reduces to the expected $4$. The resulting sequence of clusters\vspace{-.75em}
    \begin{equation*}
        \begin{matrix}
            _{i\,=\,2} & & _{i\,=\,1} & & _{i\,=\,2} & & _{i\,=\,1} \\[.25em] \hline \\[-1em]
            (41,20,4) & \overset{\mu_1}{\underset{i-1}{\longmapsto}} & (7,20,4) & \overset{\mu_2}{\underset{i+1}{\longmapsto}} & (7,2,4) & \overset{\mu_1}{\underset{i-1}{\longmapsto}} & (5,2,4)
        \end{matrix}
    \end{equation*}
    culminates in the reduced vertex $\{5,2,4\}$ which, as a set, coincides with the one previously found for $\hat{q}$, confirming the two forms to be $\mathrm{PGL}$-equivalent.
    On the contrary, the reduced clusters differ: being $\mu_1\mu_2\mu_1$ of odd length, $\check{q}_0^\mu=U(4,5,2)=(2,5,4)=\check{q}_0\neq\hat{q}_0^\mu$. Indeed, $\check{q}$ and $\hat{q}$ appears on oppositely oriented topographs (Figures \ref{fig:eghat} and \ref{fig:egcheck}), revealing them not to be further equivalent in the strict sense.

    We conclude the example comparing words with directions from the fractional algorithm. For the complex residues $\hat{z}_0=\frac{-1-\mathtt{i}\sqrt{31}}{8}$ and $\check{z}_0=\frac{1-i\sqrt{31}}{8}$,
    \begin{equation*}
        z_{\hat{q}}=-3+\frac{1}{2+\hat{z}_0}=[-3,2+\hat{z}_0], \hspace{3em} z_{\check{q}}=\frac{1}{3+\check{z}_0}=[0,3+\check{z}_0].
    \end{equation*}
    Directions in the former case read as $L^{-3}R^2$. Taking
    \begin{equation*}
        z_{\hat{q}\circ S}=\tfrac{81+\mathtt{i}\sqrt{31}}{32}=\left[2,1,1+\tfrac{-1+\mathtt{i}\sqrt{31}}{8}\right]
    \end{equation*}
    instead and comparing with the snake graph via duality confirms a zone \texttt{5} behavior:
    \begin{equation*}
        (LLRL)^*=\underbrace{RLL}_w\hspace{-.2em}L.
    \end{equation*}
    In the latter case, all directions $L^0R^3=RRR$ are needed, confirming a zone \texttt{2} behavior:
    \begin{equation*}
        (RRR)^*=LRL=W(\pazocal{G}(z_{\check{q}})).
    \end{equation*}
    For completeness, a cluster in zone \texttt{4}, with respect to the triangle having Farey sum $\frac{1}{2}$, would be $(16,4,5)$ of root $\frac{17+\mathtt{i}\sqrt{31}}{32}$ (\Cref{fig:plot}): the fractional algorithm prescribes $RLE$ instead of the single cluster instruction $L$. Indeed, $\mu_1(16,4,5)=(2,4,5)$.     
\end{example}

\begin{figure}[!ht]
        \centering
        \includegraphics[width=.875\linewidth]{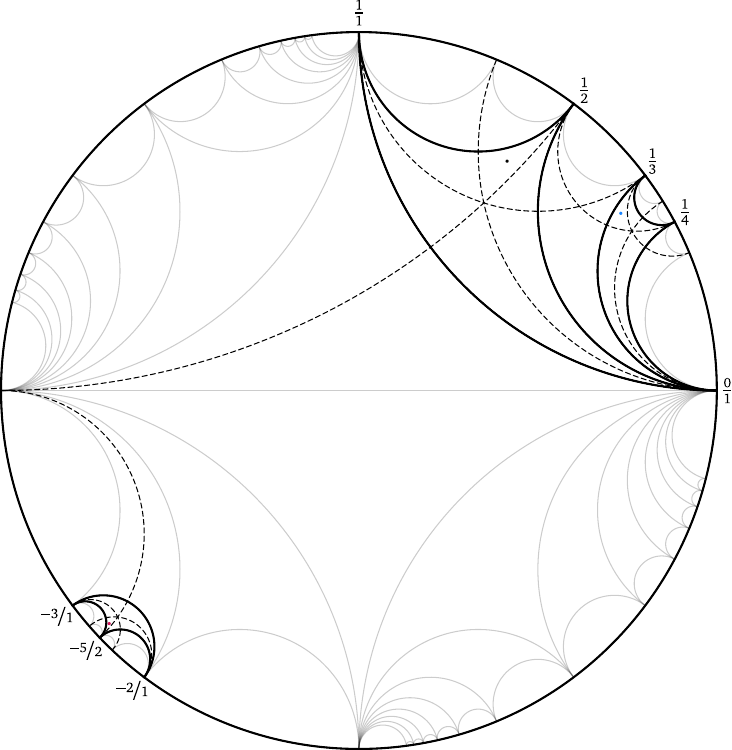}
        \caption{The roots {\color{2red}$z_{\hat{q}}=\frac{-81+\mathtt{i}\sqrt{31}}{32}$} and {\color{2blue}$z_{\check{q}}=\frac{25+\mathtt{i}\sqrt{31}}{82}$}, together with $\frac{17+\mathtt{i}\sqrt{31}}{32}$, located on the hyperbolic Farey tessellation of the disk $\mathbb{D}$.}
        \label{fig:plot}
    \end{figure}

\begin{figure}[t]
        \centering
        \includegraphics[width=.375\linewidth]{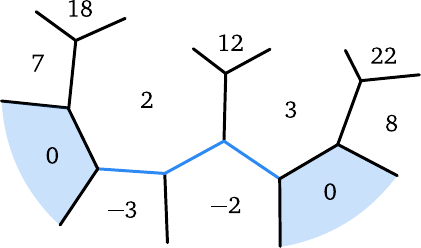}
        \caption{Conway's river for the quadratic form $18x^2-23xy+7y^2$.}
        \label{fig:eg4}
    \end{figure}

\begin{example}[$\Delta=25=5^2$]\label{ex:perfectsquare}
    For $q_1=(18,2,7)$, $W(\pazocal{G}(\nicefrac{7}{9}))=L^2RL^2$. However, after two mutations we must halt at vertex $\{0,2,-3\}$, which is verified to be the left mouth in $\pazocal{T}_{q_1}$ (\Cref{fig:eg4}). Proceeding with Conway's rule for $0$ as north label, we reach the right mouth $\{-2,0,3\}$. Explicitly,
    \begin{equation*}
        \begin{matrix}
            (18,2,7) & \overset{\mu_1}{\longmapsto} & (0,2,7) & \overset{\mu_3}{\longmapsto} & (0,2,-3) & \overset{\mu_1}{\longmapsto} & \bm{\oslash}\hfill & & & \\[.75em]
            & & & & \hspace{1.5em}\rotatebox[origin=c]{-90}{$\longmapsto$}\eqref{Conway-rule} & & & & & \\[.5em]
            & & & & (-2,2,-3) & \overset{\mu_3}{\longmapsto} & (-2,2,3) & \overset{\mu_2}{\longmapsto} & (-2,0,-3).
        \end{matrix}
    \end{equation*}
    
    For $q_2=(2,12,3)$, $W(\pazocal{G}(\nicefrac{-1}{2}))=SL=SW(\pazocal{G}(\nicefrac{1}{2}))=S\overline{W}(\pazocal{G}(2))$ and the algorithm runs in full till the expected right mouth $\{0,-2,3\}$:
    \begin{equation*}
        \begin{matrix}
            (2,12,3) & \overset{\mu_2}{\longmapsto} & (2,-2,3) & \overset{\mu_1}{\longmapsto} & (0,-2,3).
        \end{matrix}
    \end{equation*}
    
    For $q_3=(8,3,22)$, $W(\pazocal{G}(2))=R$ and the algorithm terminates at the right lakeshore $\{8,3,0\}$, see \Cref{fig:eg4}. Since $8>3$, we proceed with $\mu_1$ obtaining the vertex $\{-2,3,0\}$ whose sign-change confirms it to be the right mouth:
    \begin{equation*}
        \begin{matrix}
            (8,3,22) & \overset{\mu_3}{\longmapsto} & (8,3,0) & \overset{\mu_1}{\longmapsto} & (-2,3,0).
        \end{matrix}
    \end{equation*}
    This triplet of forms exhausts all possible behaviors when the discriminant is a perfect square.  
    \end{example}



{\small
\bibliographystyle{abbrv}
\bibliography{references}
}

\end{document}